\newtheorem{theorem}{Theorem}[section]
\newtheorem{corollary}{Corollary}[section]
 \def\thtext#1{
 \catcode`@=11
 \gdef\@thmcountersep{. #1}
 \catcode`@=12}
 \newcounter{Rk}[section]
 \renewcommand{\thtext}{\thesection.\arabic{Rk}}
 \newenvironment{remark}{\trivlist\item[\hskip\labelsep{\bf Remark}]
 \refstepcounter{Rk}{\bf\thesection.\arabic{Rk}.}}%
 {\endtrivlist}
 \newcounter{Df}[section]
 \renewcommand{\thtext}{\thesection.\arabic{Df}}
 \newenvironment{definition}{\trivlist\item[\hskip\labelsep{\bf Definition}]
 \par\refstepcounter{Df}{\bf\thesection.\arabic{Df}.}}%
 {\endtrivlist}
 \newenvironment{example}{\trivlist \item[\hskip\labelsep{\bf Example.}]}%
 {\endtrivlist}
\newcommand{\exam}{\raisebox{-0.35\height}{\includegraphics[width=1.0cm]{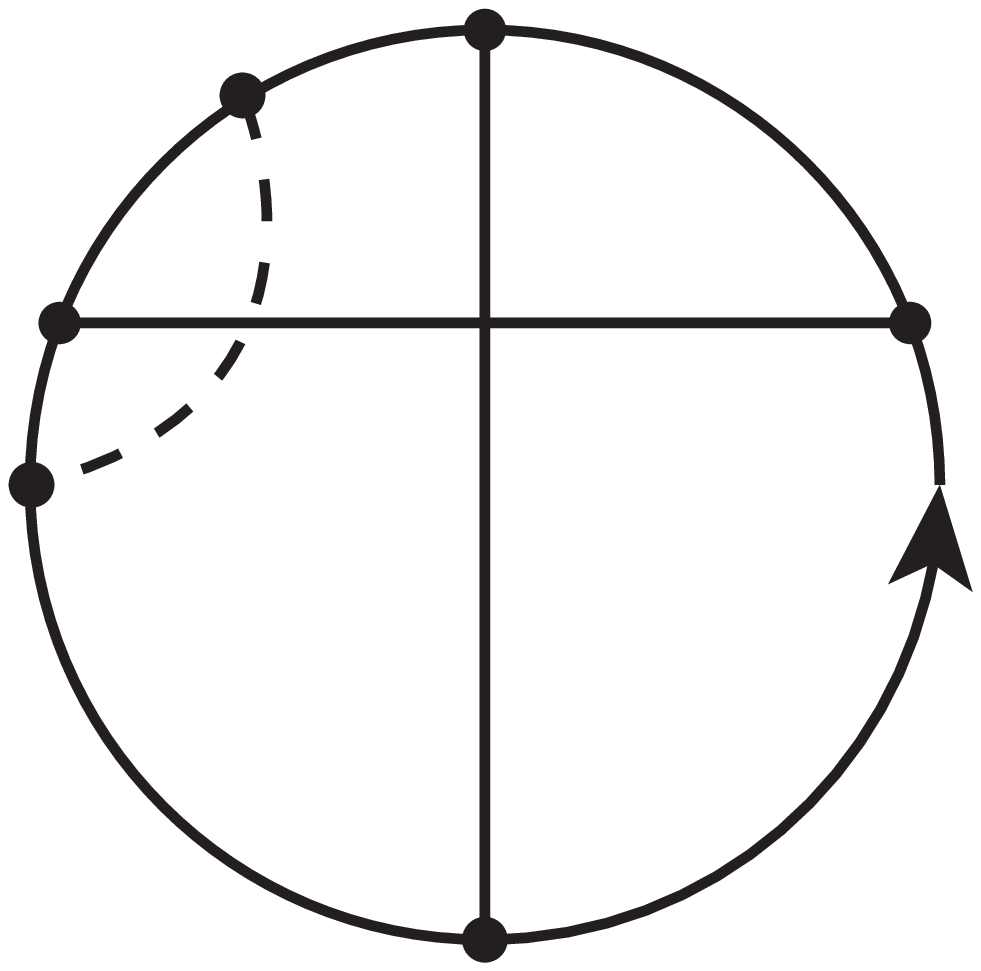}}}
\newcommand{\exone}{\raisebox{-0.44\height}{\includegraphics[width=1.2cm]{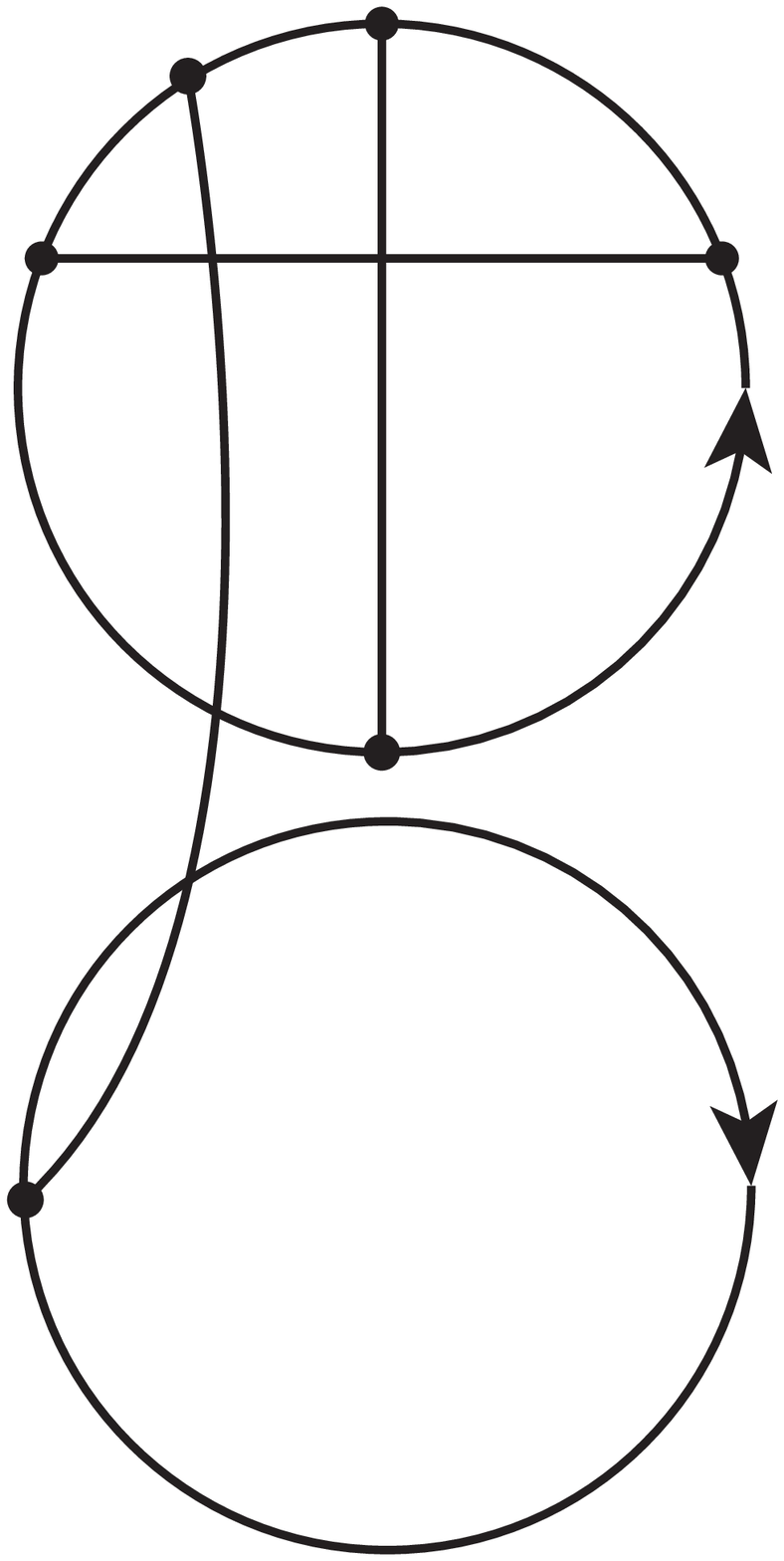}}}
\newcommand{\extwo}{\raisebox{-0.44\height}{\includegraphics[width=1.2cm]{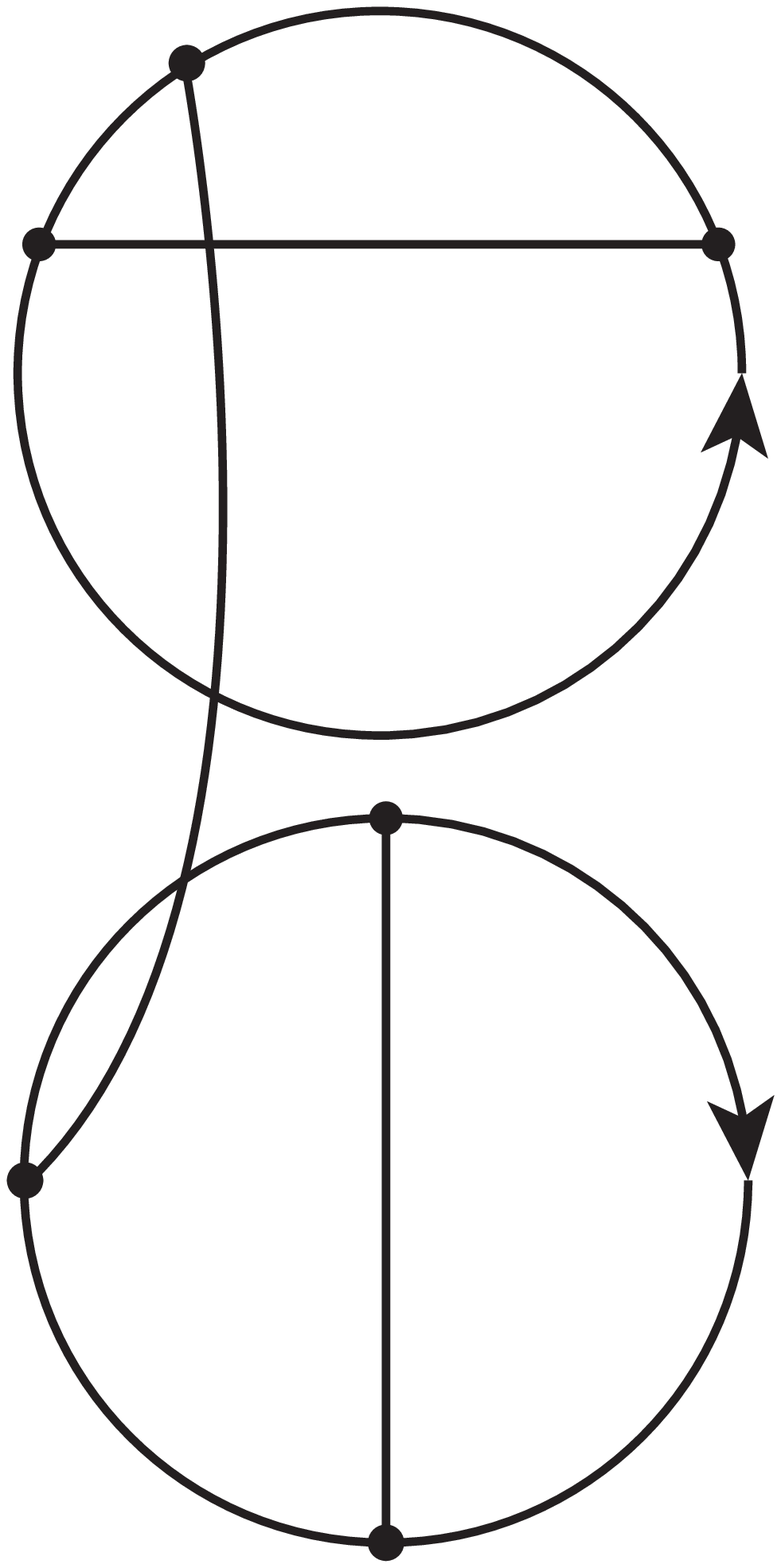}}}
\newcommand{\exthree}{\raisebox{-0.44\height}{\includegraphics[width=1.2cm]{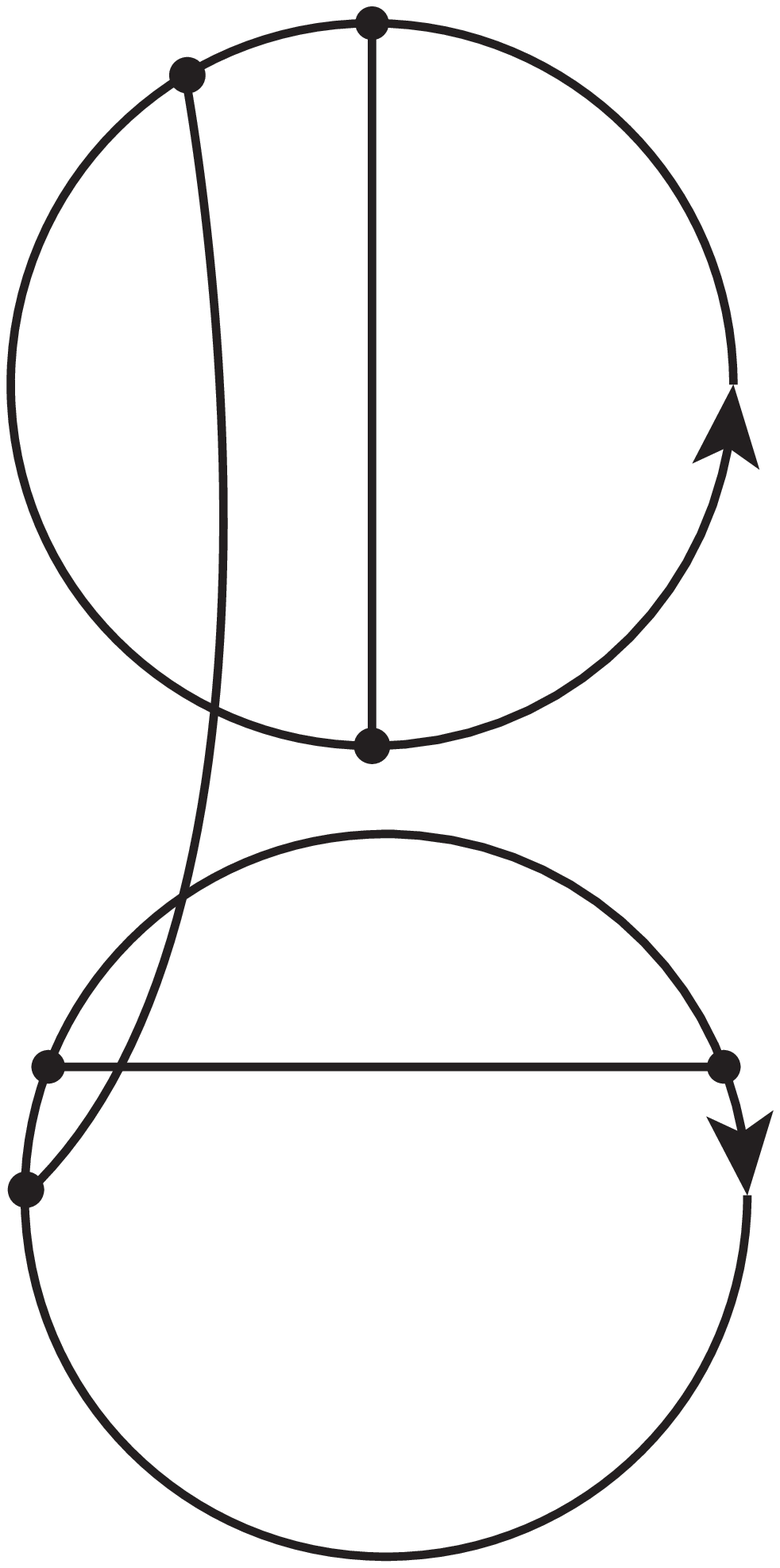}}}
\newcommand{\exfour}{\raisebox{-0.44\height}{\includegraphics[width=1.2cm]{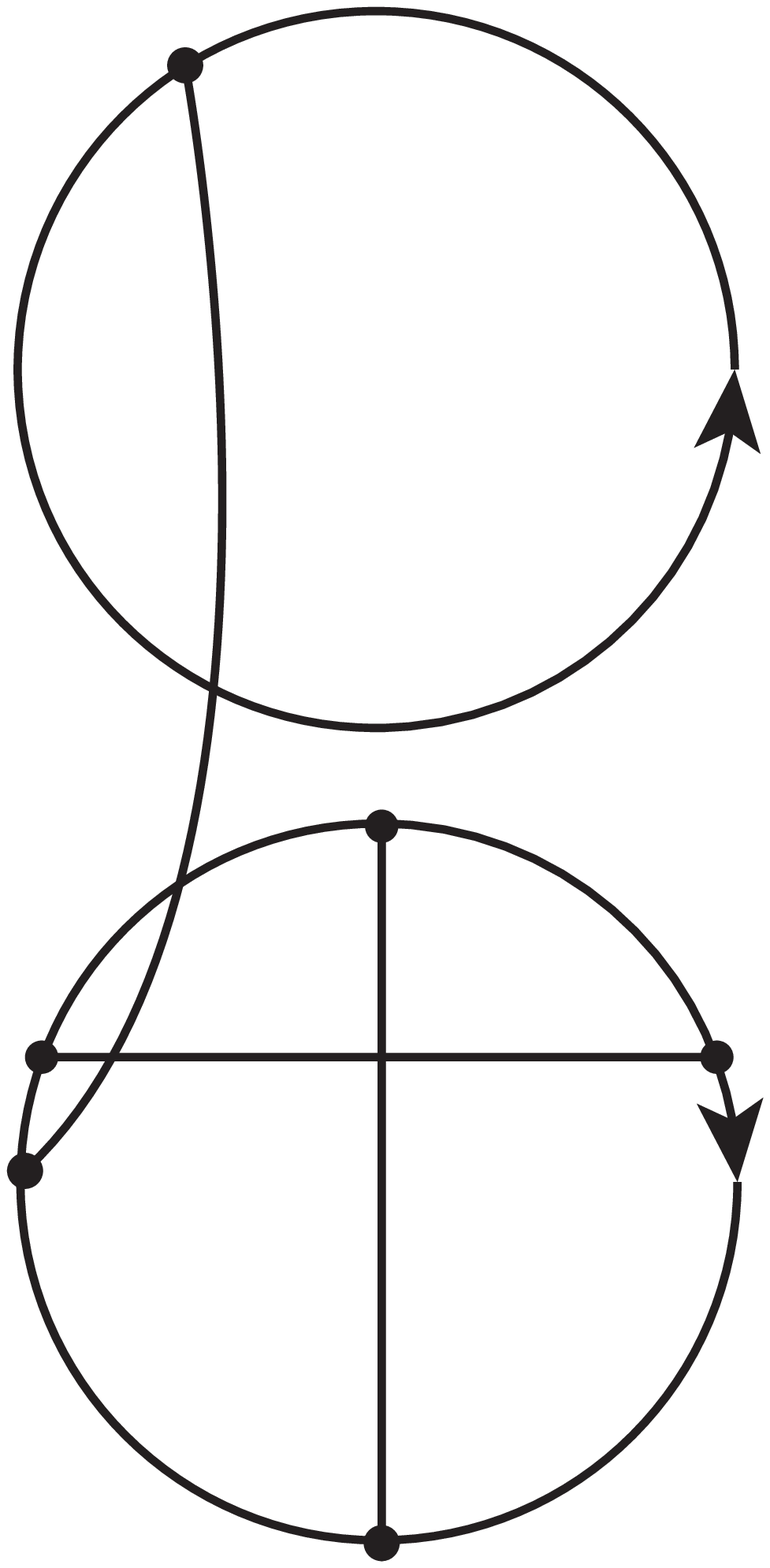}}}
\newcommand{\exfive}{\raisebox{-0.44\height}{\includegraphics[width=1.2cm]{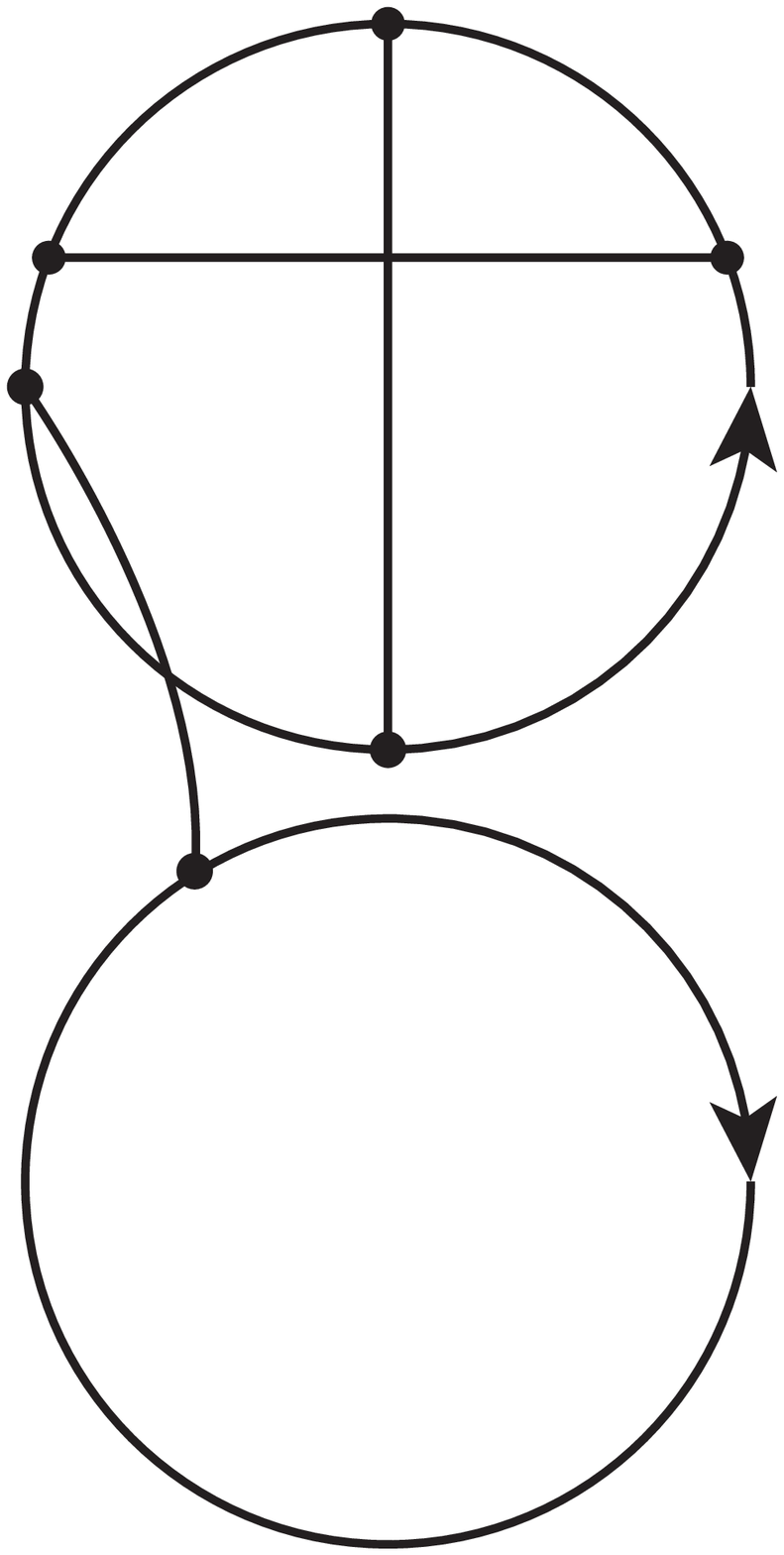}}}
\newcommand{\exsix}{\raisebox{-0.44\height}{\includegraphics[width=1.2cm]{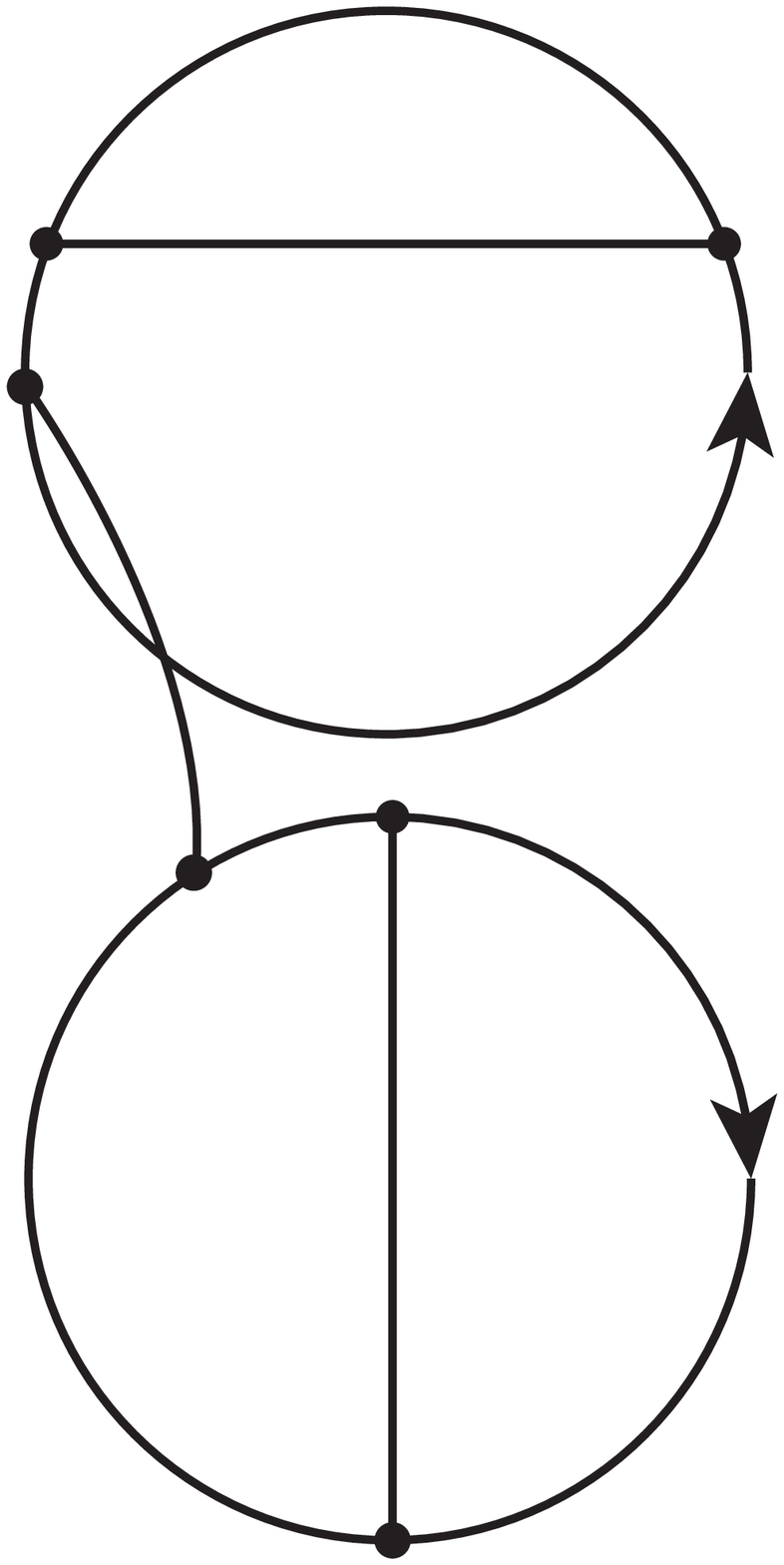}}}
\newcommand{\exseven}{\raisebox{-0.44\height}{\includegraphics[width=1.2cm]{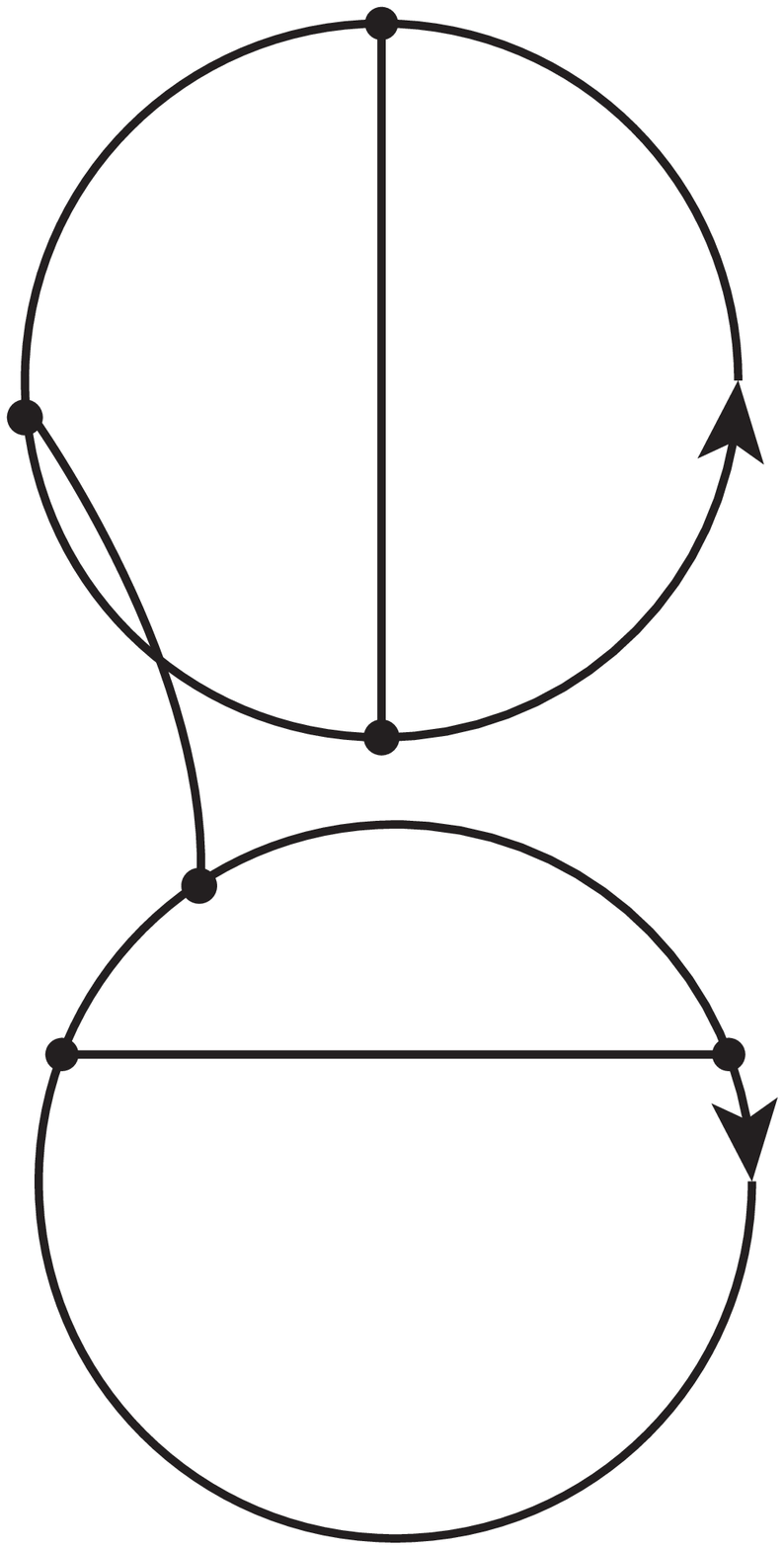}}}
\newcommand{\exeight}{\raisebox{-0.44\height}{\includegraphics[width=1.2cm]{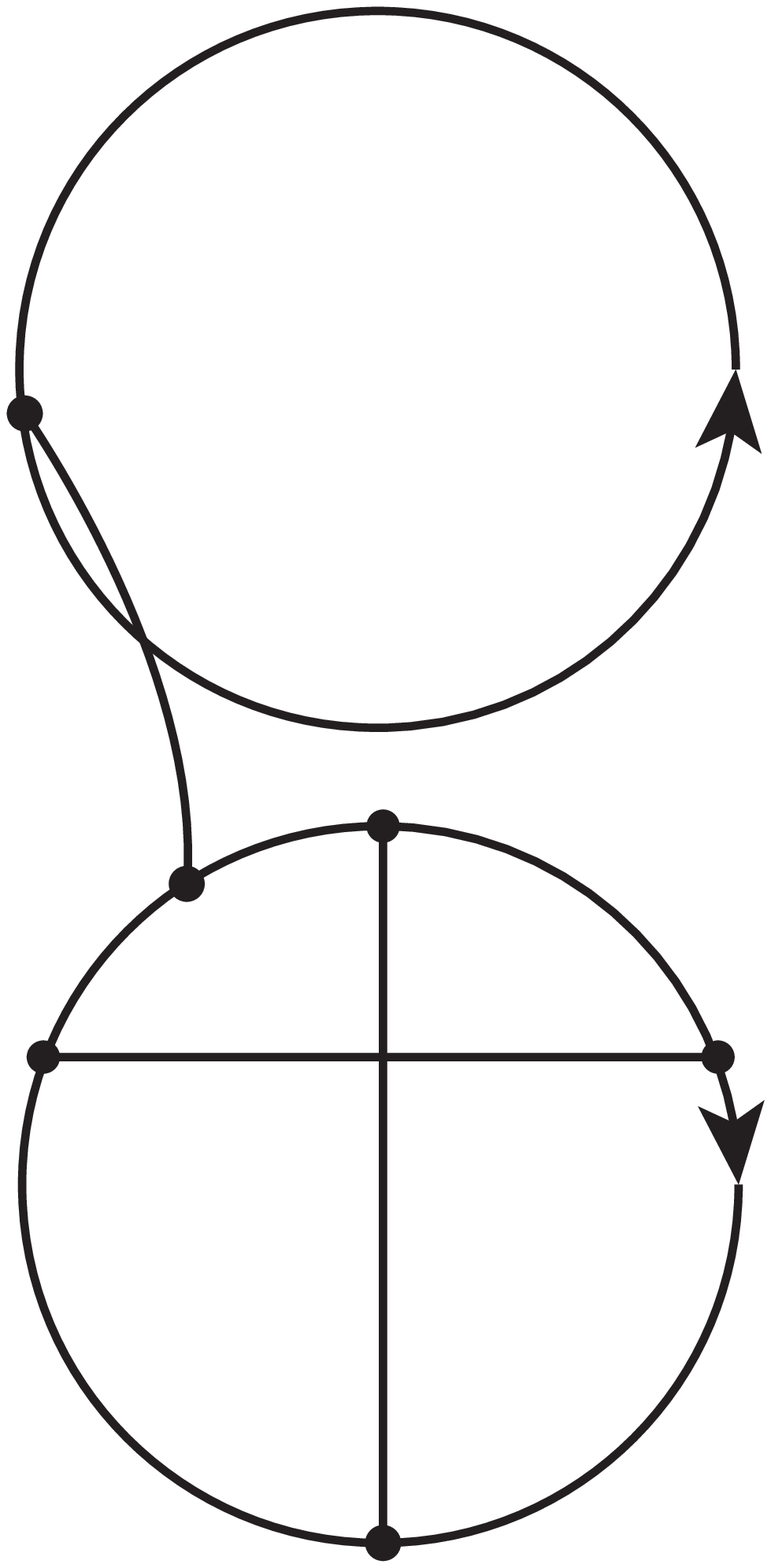}}}
\newcommand{\fl}{\raisebox{-0.35\height}{\includegraphics[width=1.0cm]{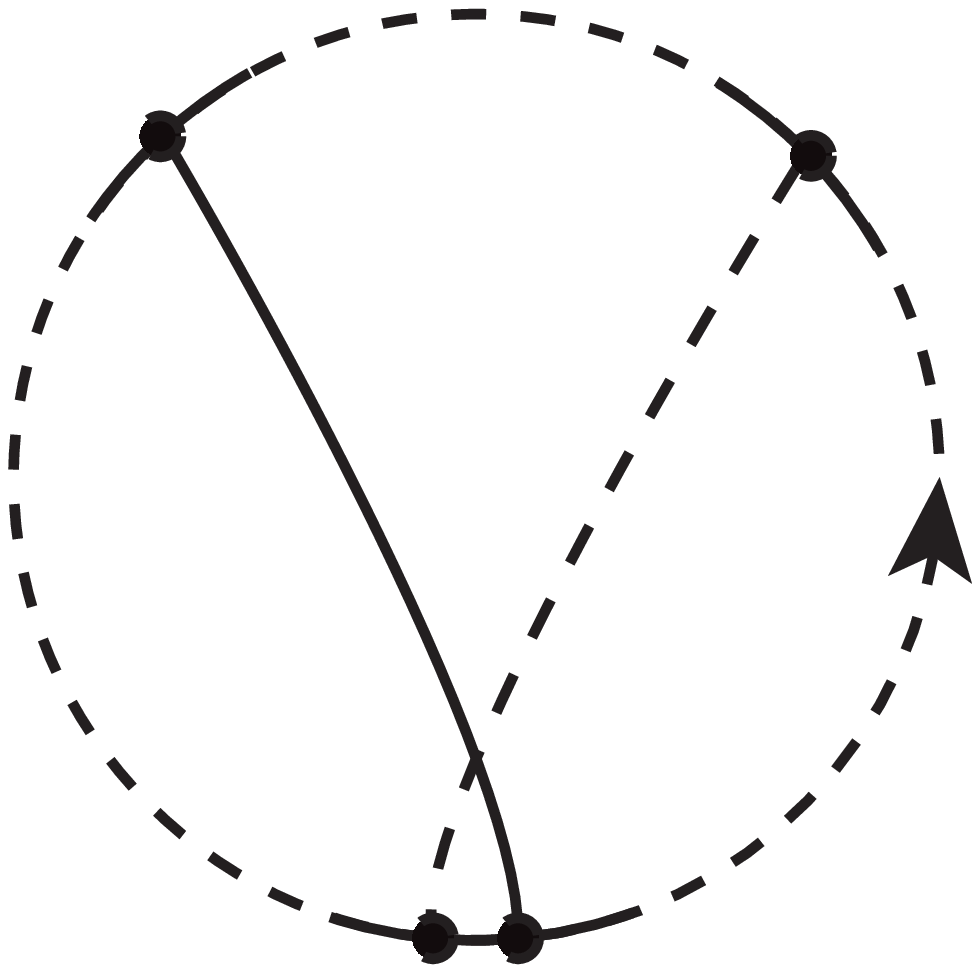}}}
\newcommand{\flo}{\raisebox{-0.44\height}{\includegraphics[width=1.2cm]{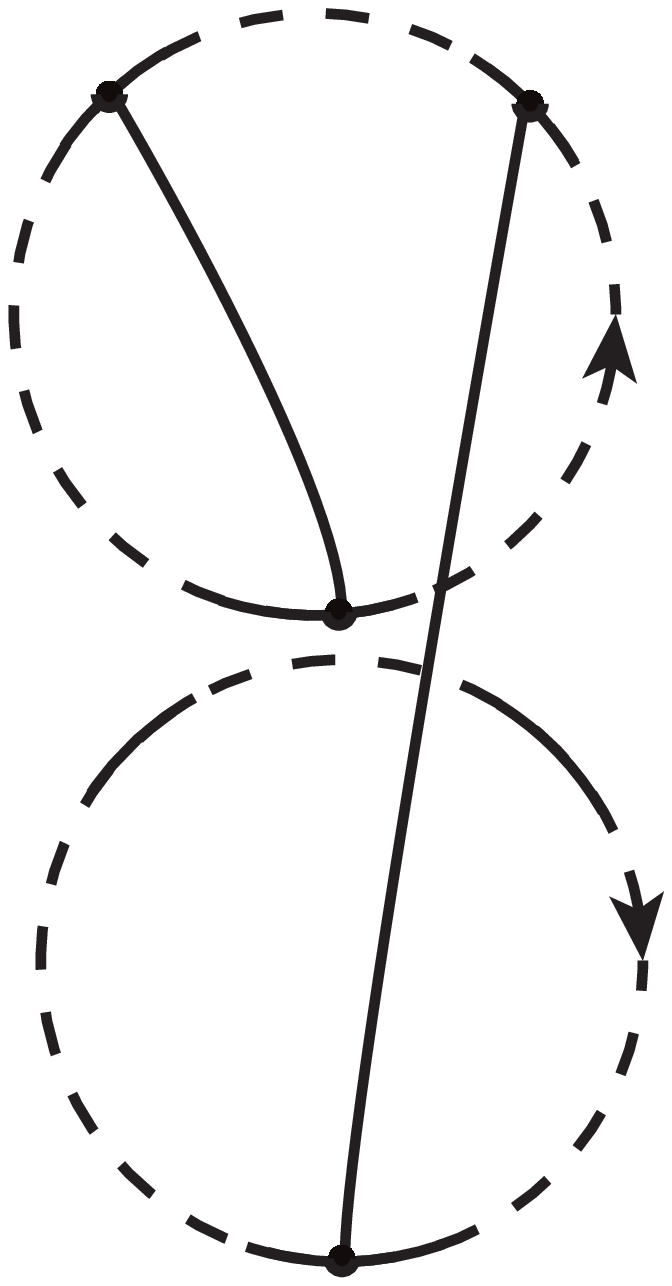}}}
\newcommand{\flt}{\raisebox{-0.44\height}{\includegraphics[width=1.2cm]{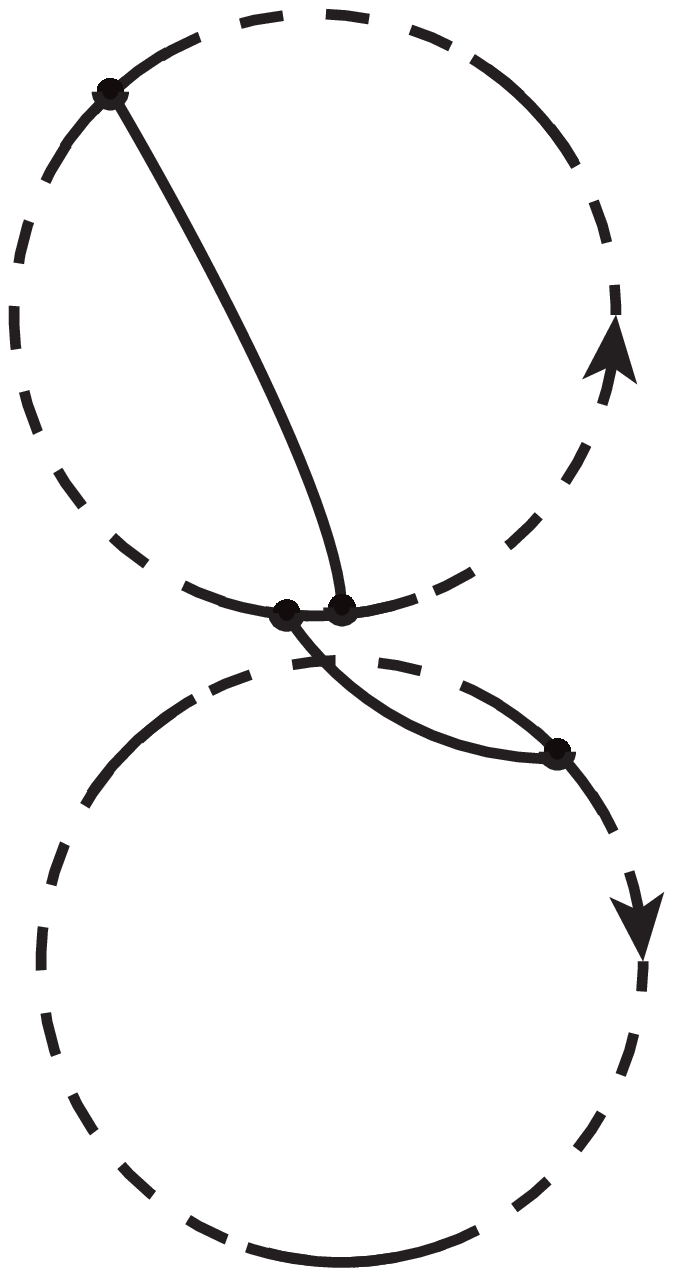}}}
\newcommand{\flth}{\raisebox{-0.44\height}{\includegraphics[width=1.2cm]{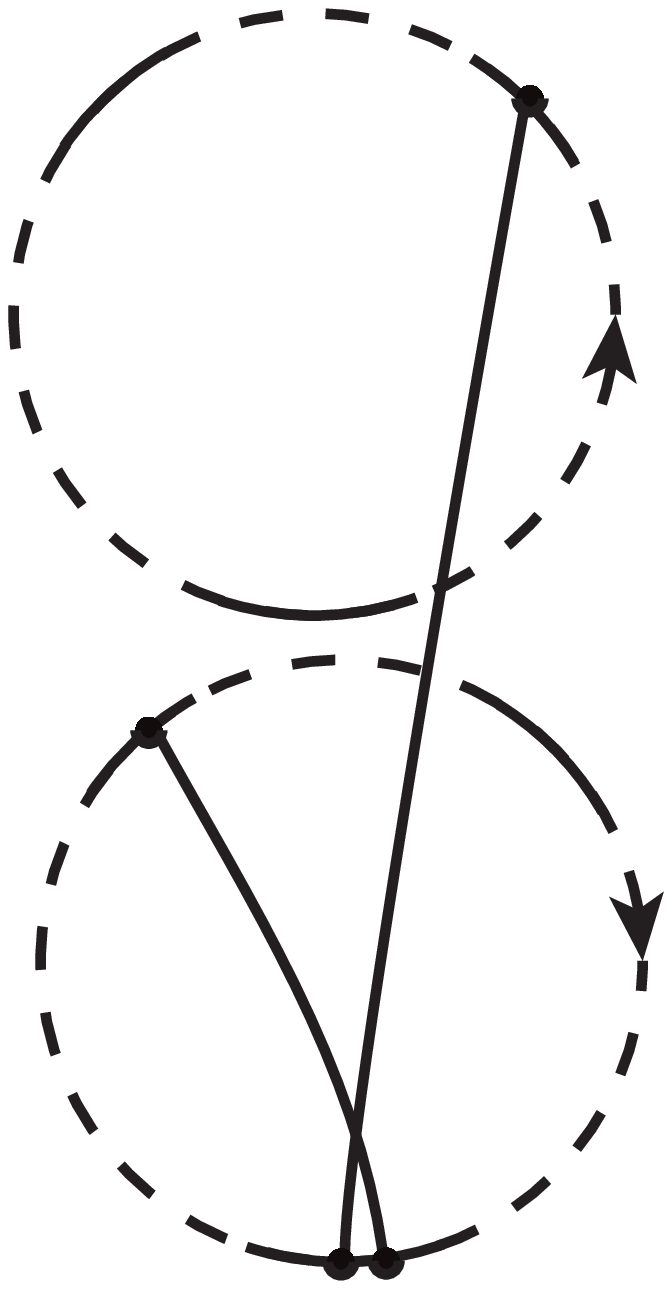}}}
\newcommand{\flf}{\raisebox{-0.44\height}{\includegraphics[width=1.2cm]{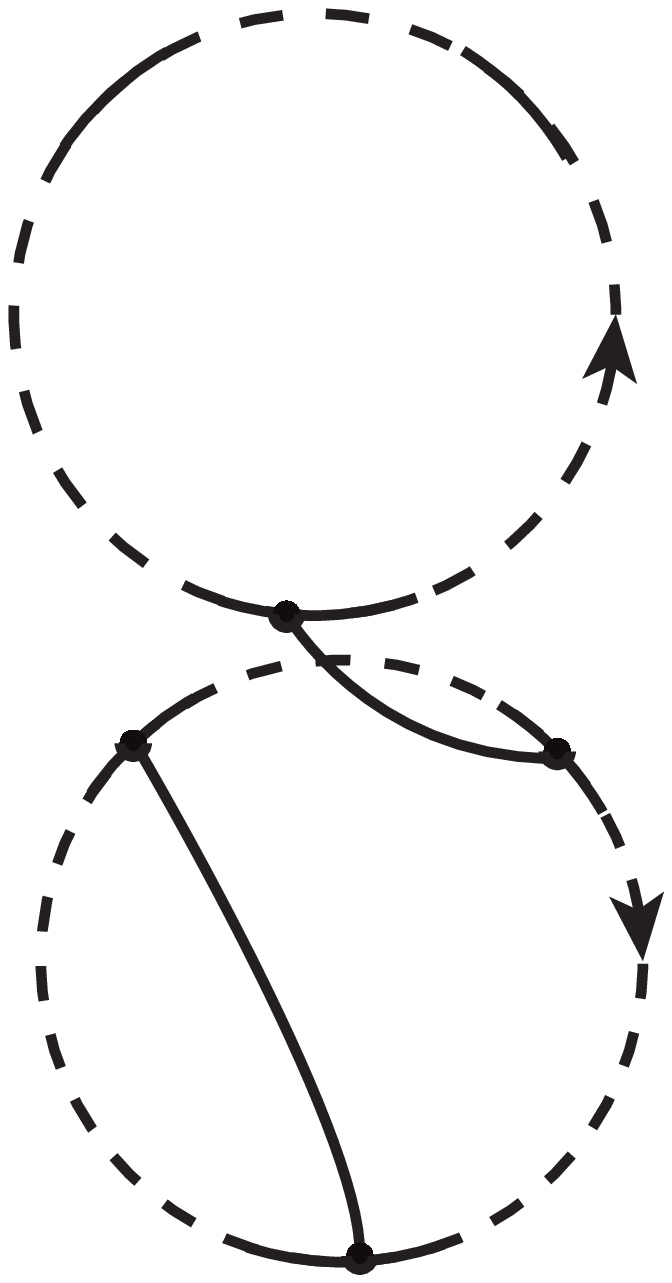}}}
\newcommand{\slef}{\raisebox{-0.35\height}{\includegraphics[width=1.0cm]{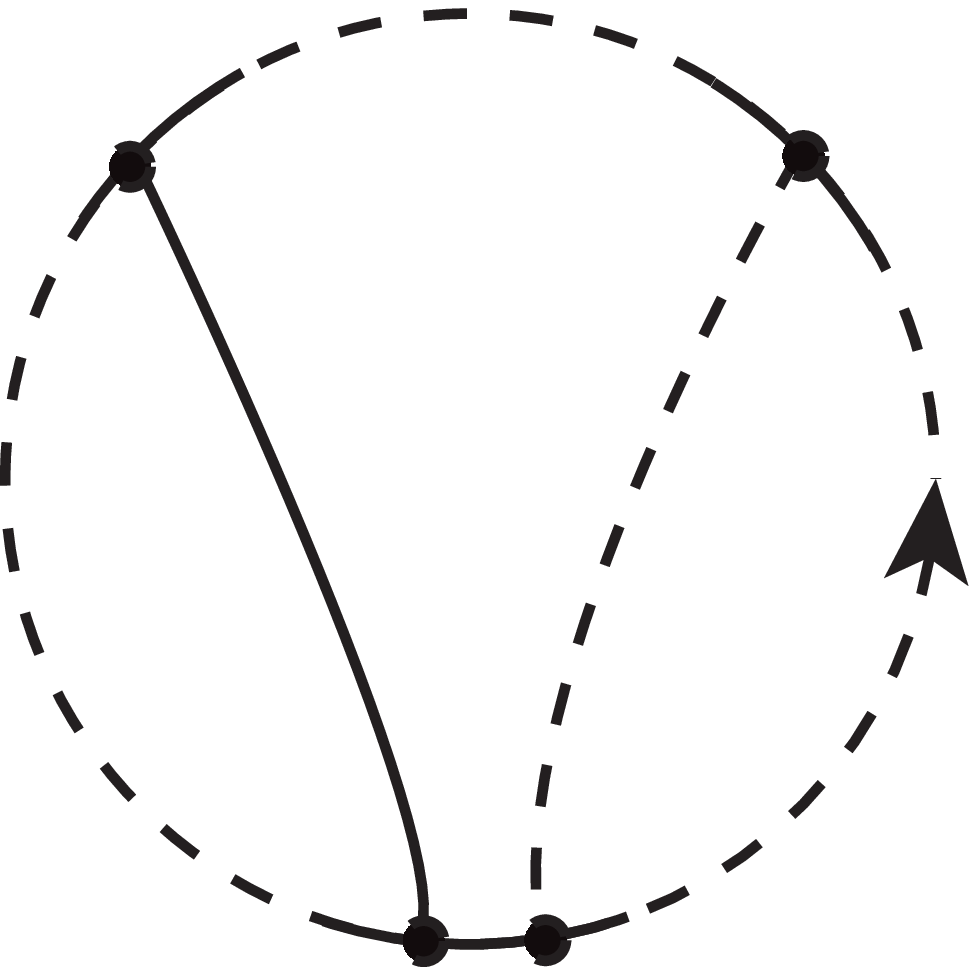}}}
\newcommand{\slo}{\raisebox{-0.44\height}{\includegraphics[width=1.2cm]{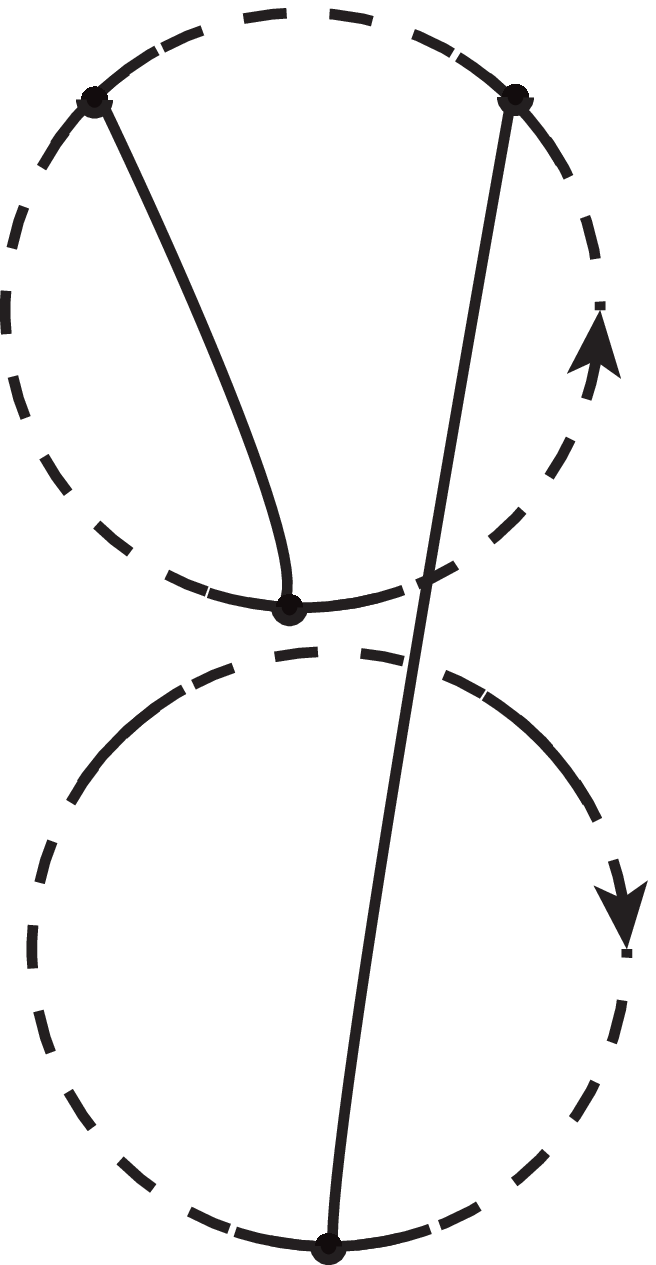}}}
\newcommand{\slt}{\raisebox{-0.44\height}{\includegraphics[width=1.2cm]{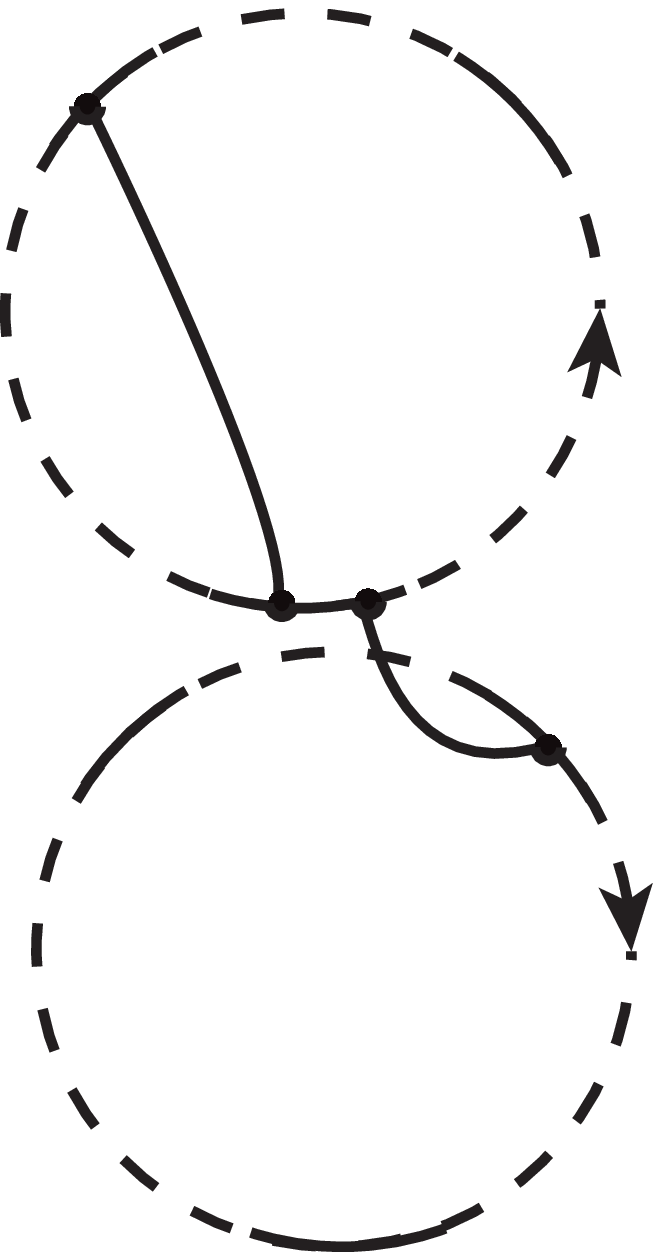}}}
\newcommand{\slth}{\raisebox{-0.44\height}{\includegraphics[width=1.2cm]{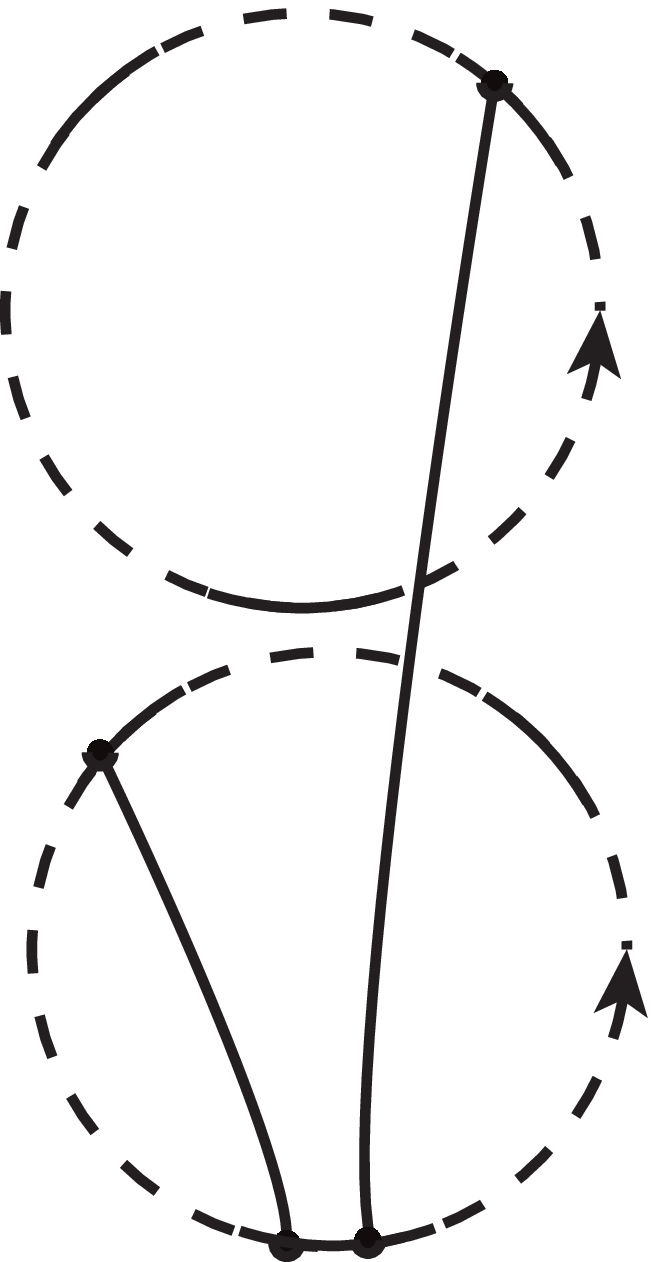}}}
\newcommand{\slf}{\raisebox{-0.44\height}{\includegraphics[width=1.2cm]{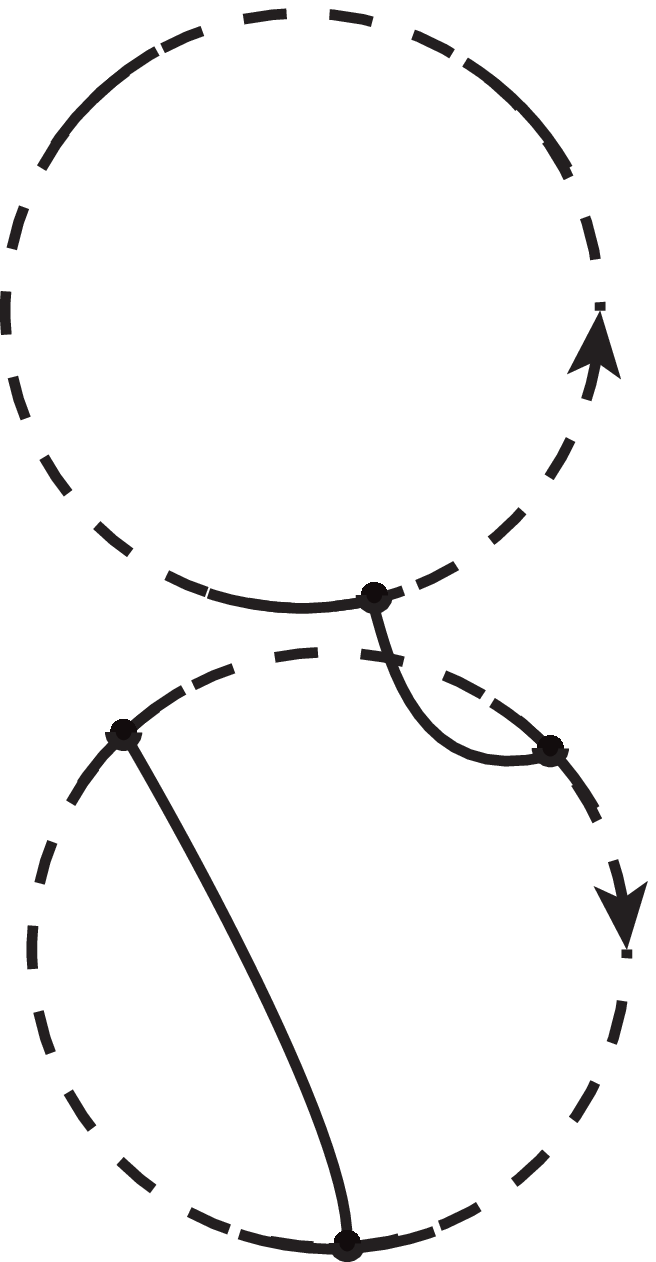}}}
\newcommand{\fr}{\raisebox{-0.35\height}{\includegraphics[width=1.0cm]{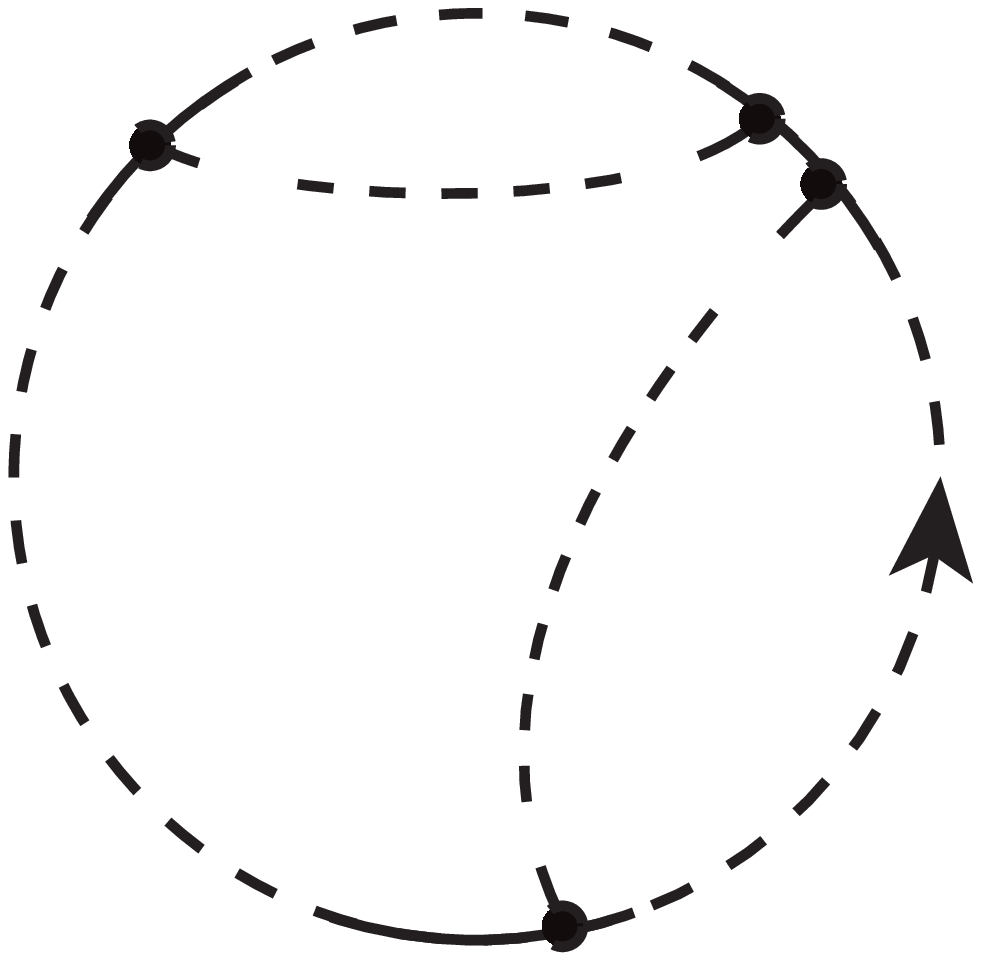}}}
\newcommand{\fro}{\raisebox{-0.44\height}{\includegraphics[width=1.2cm]{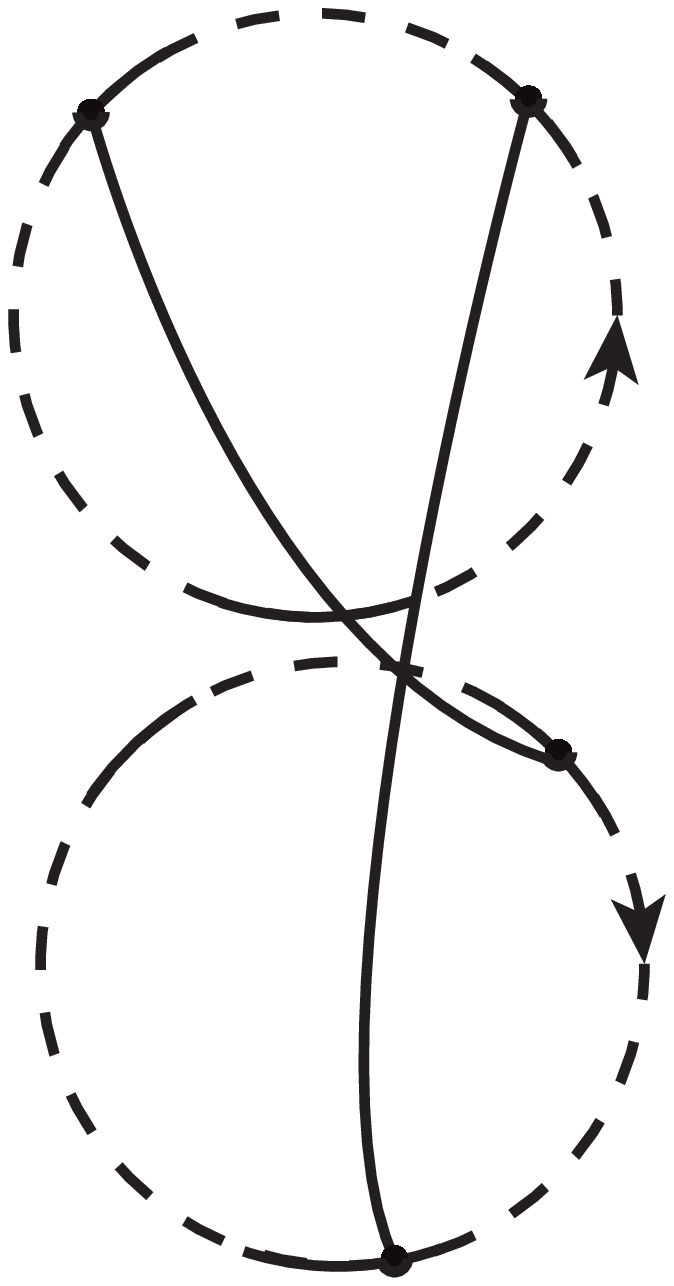}}}
\newcommand{\frt}{\raisebox{-0.44\height}{\includegraphics[width=1.2cm]{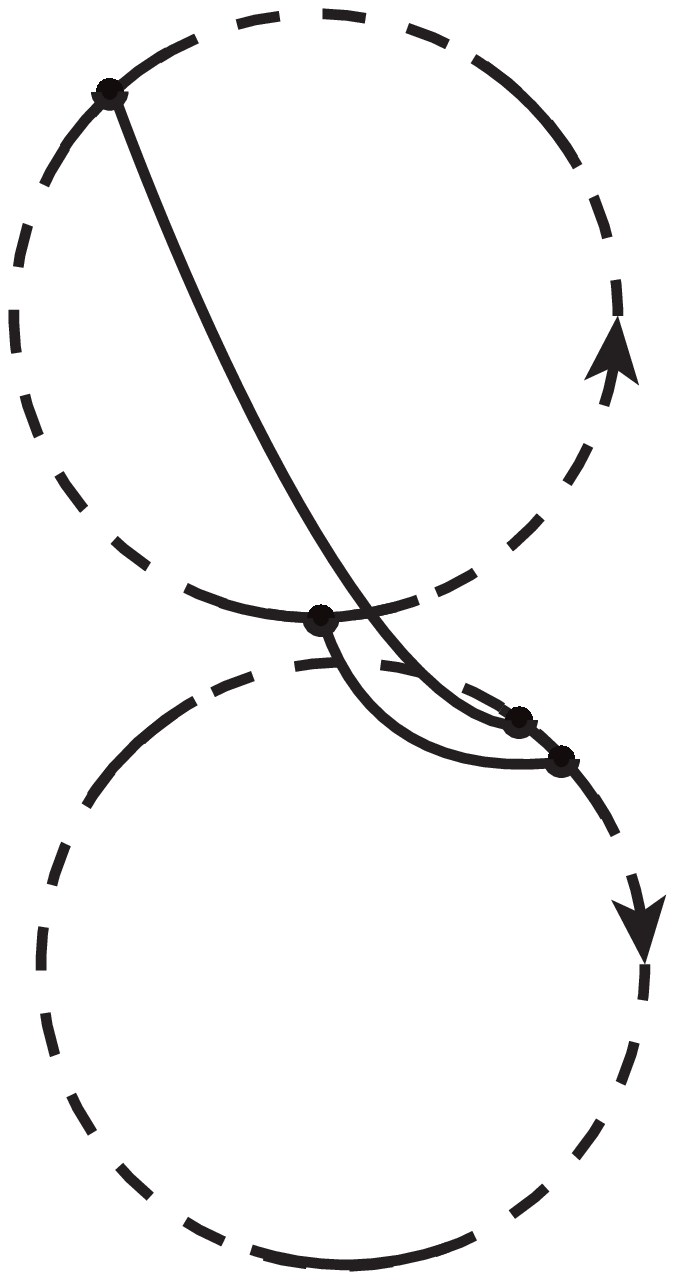}}}
\newcommand{\frth}{\raisebox{-0.44\height}{\includegraphics[width=1.2cm]{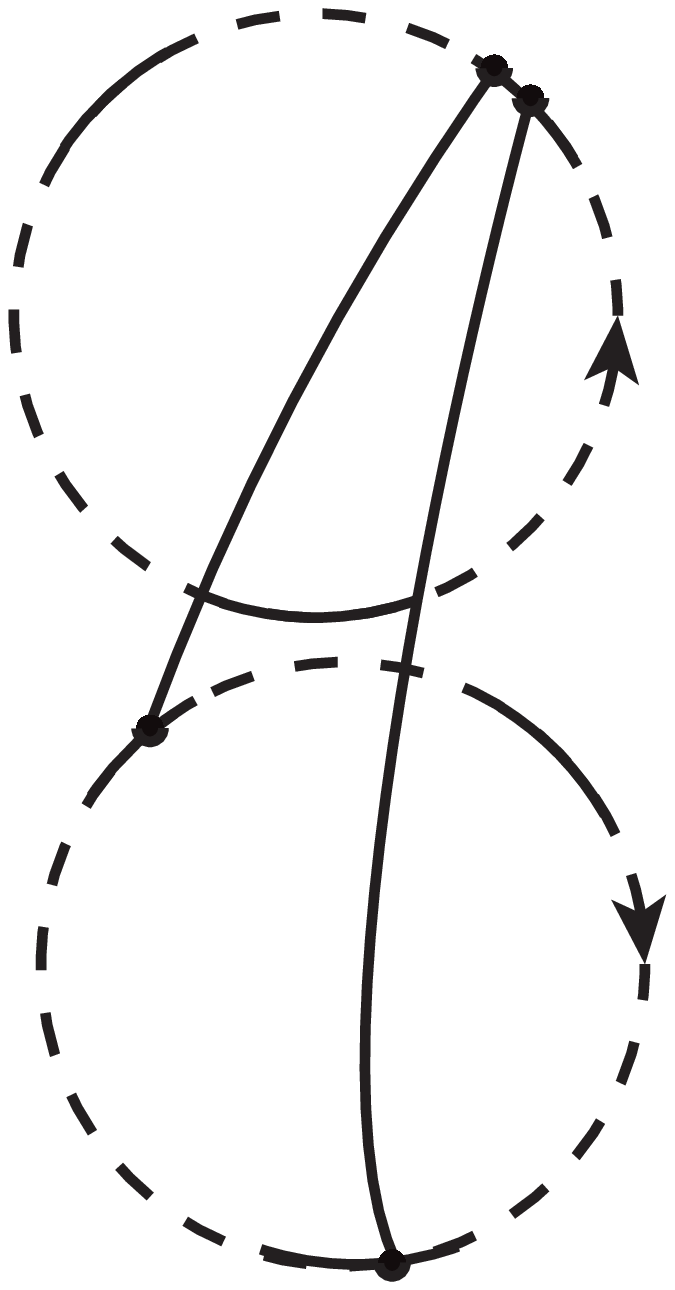}}}
\newcommand{\frf}{\raisebox{-0.44\height}{\includegraphics[width=1.2cm]{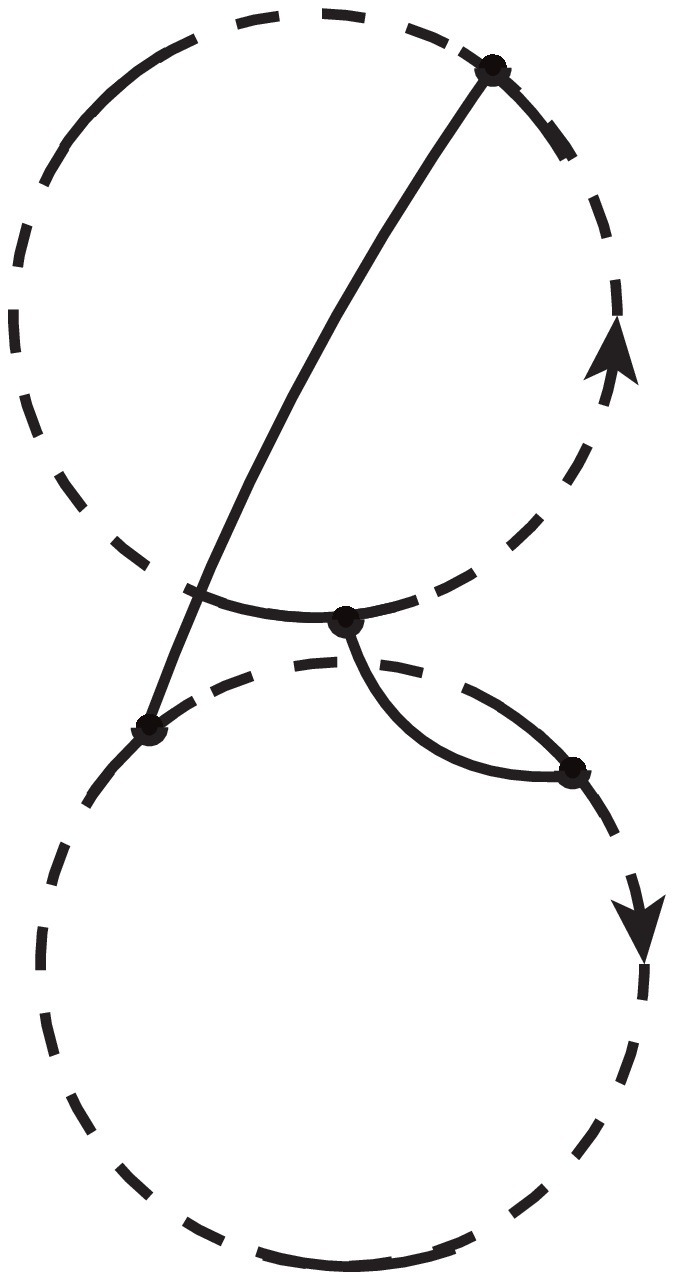}}}
\newcommand{\sref}{\raisebox{-0.35\height}{\includegraphics[width=1.0cm]{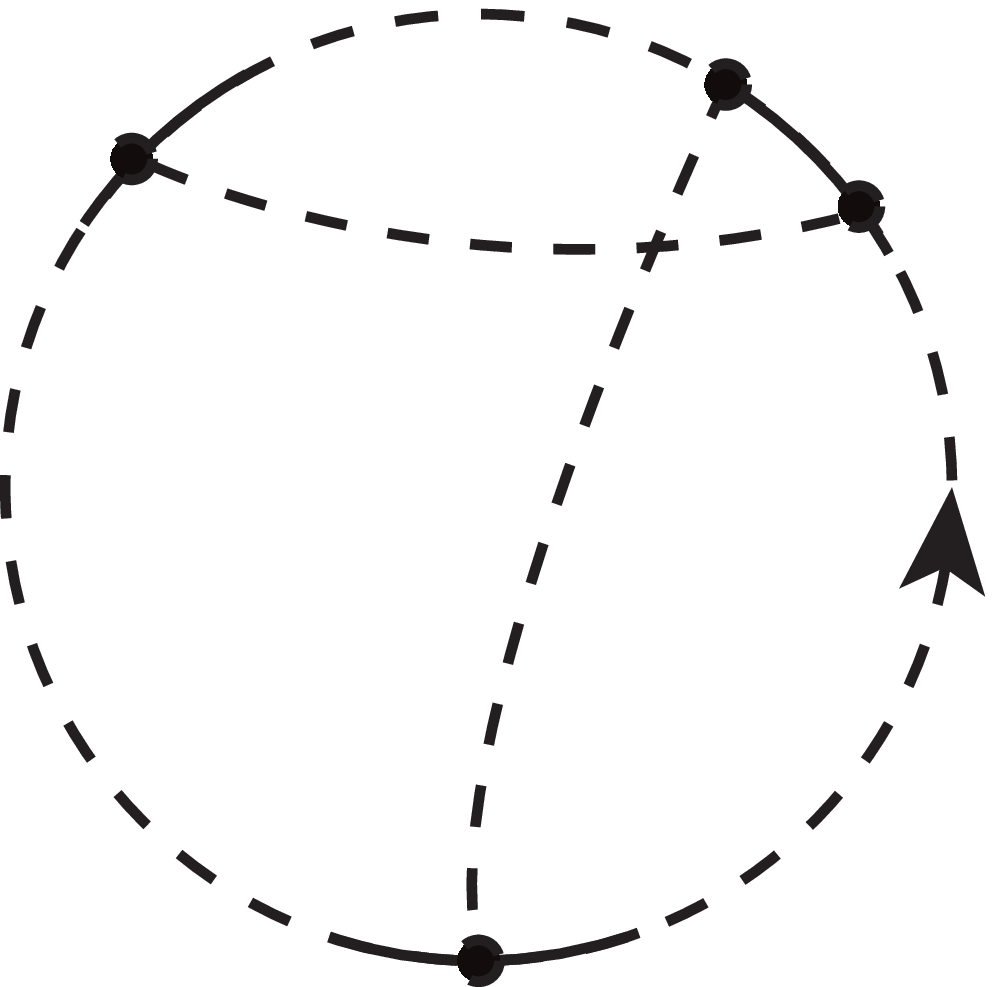}}}
\newcommand{\sro}{\raisebox{-0.44\height}{\includegraphics[width=1.2cm]{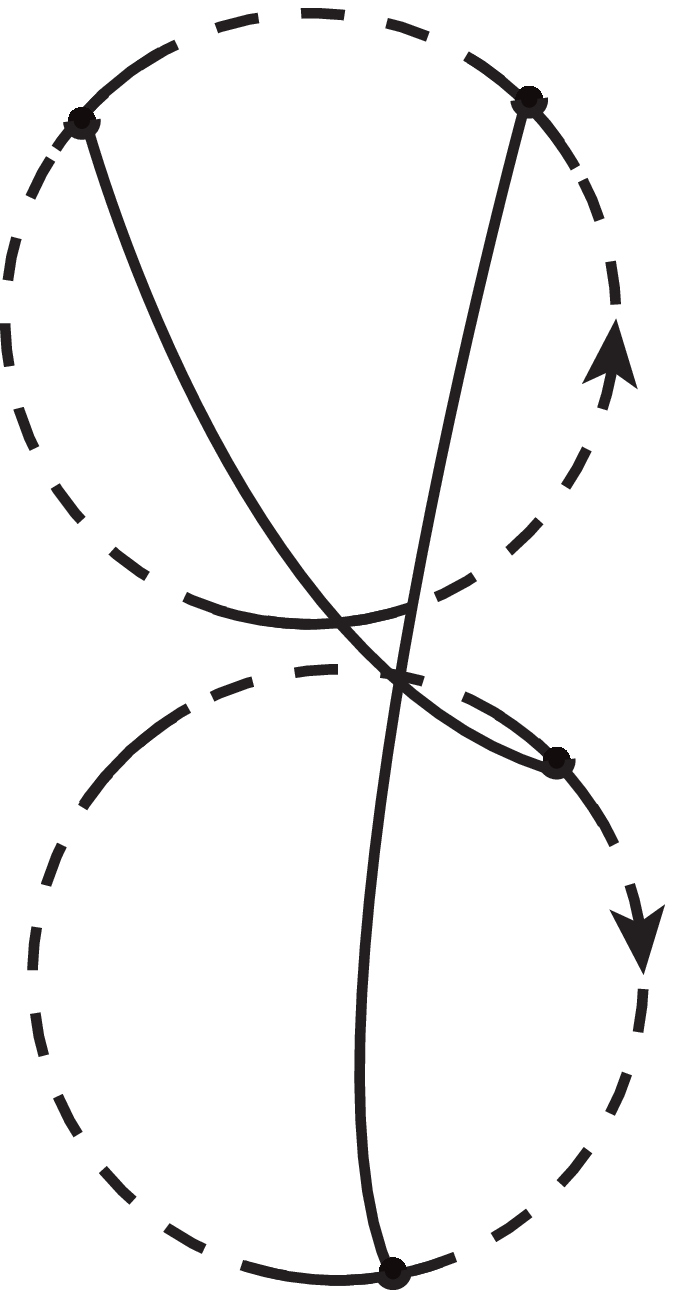}}}
\newcommand{\srt}{\raisebox{-0.44\height}{\includegraphics[width=1.2cm]{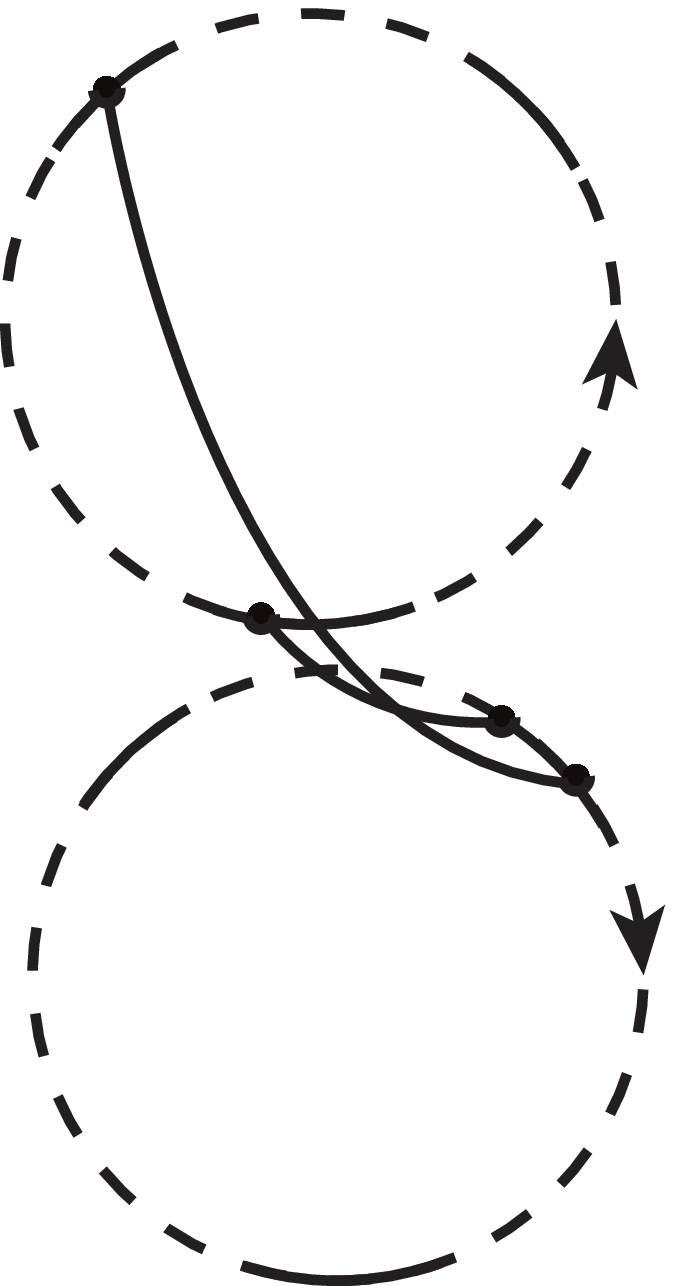}}}
\newcommand{\srth}{\raisebox{-0.44\height}{\includegraphics[width=1.2cm]{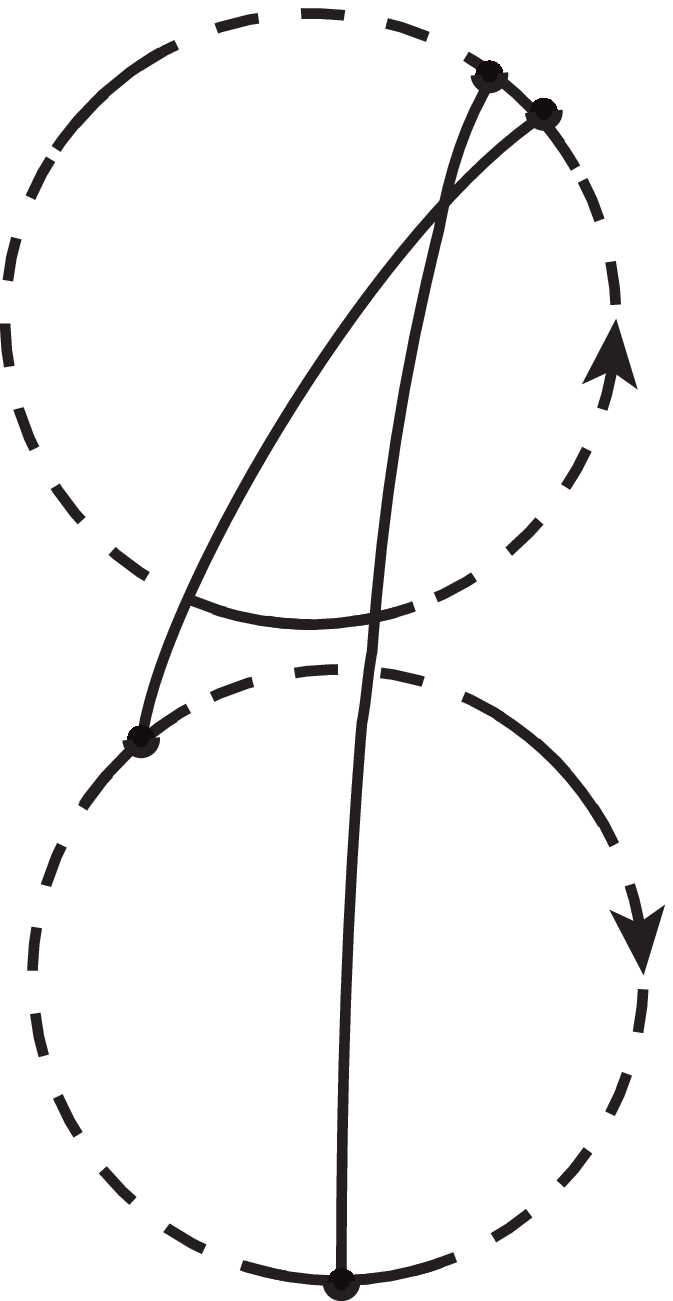}}}
\newcommand{\srf}{\raisebox{-0.44\height}{\includegraphics[width=1.2cm]{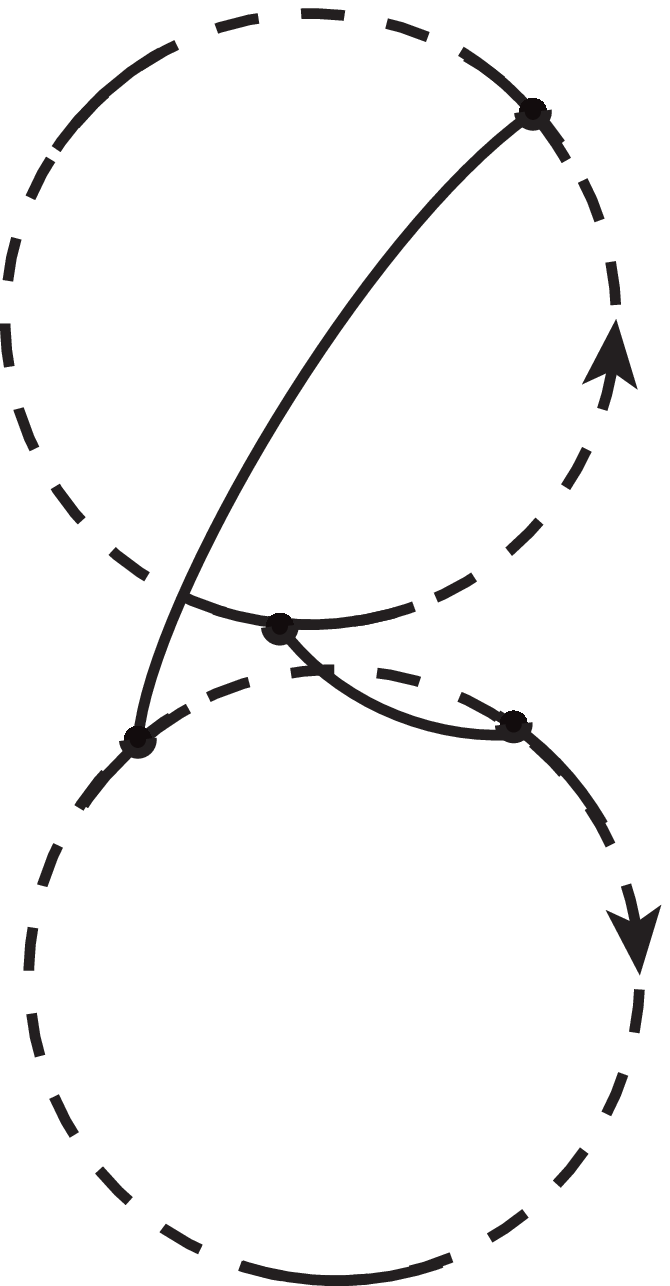}}}
\newcommand{\twocsone}{\raisebox{-0.44\height}{\includegraphics[width=1.2cm]{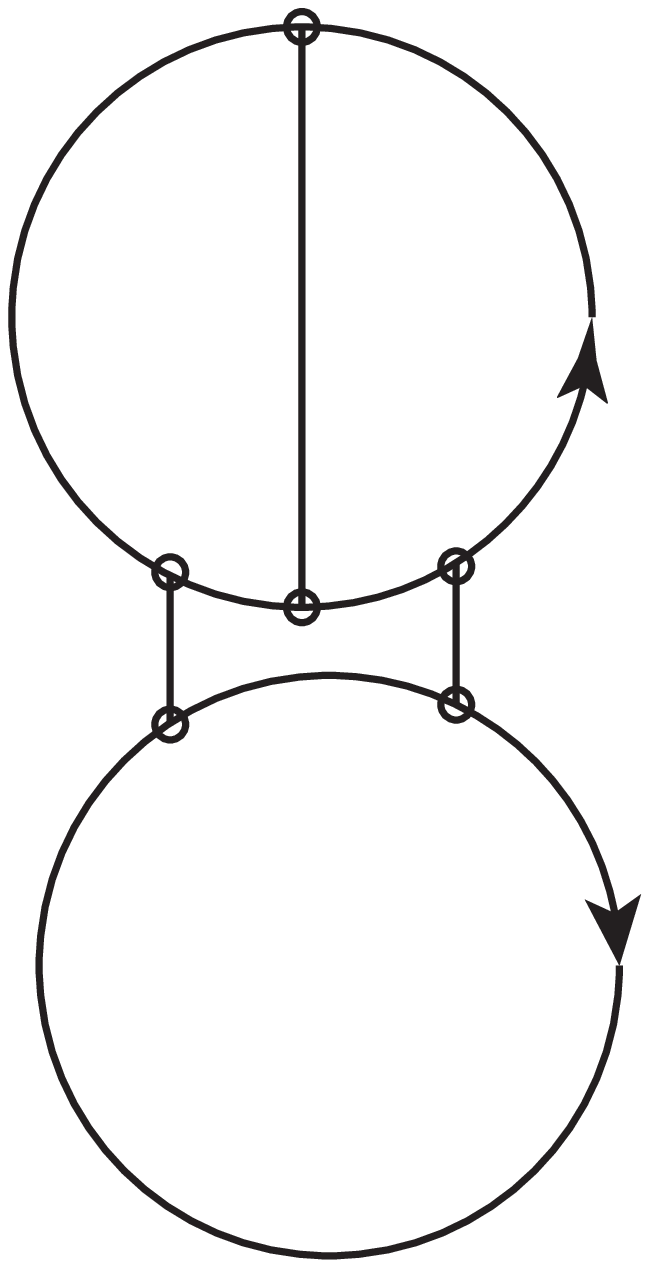}}}
\newcommand{\twocstwo}{\raisebox{-0.44\height}{\includegraphics[width=1.2cm]{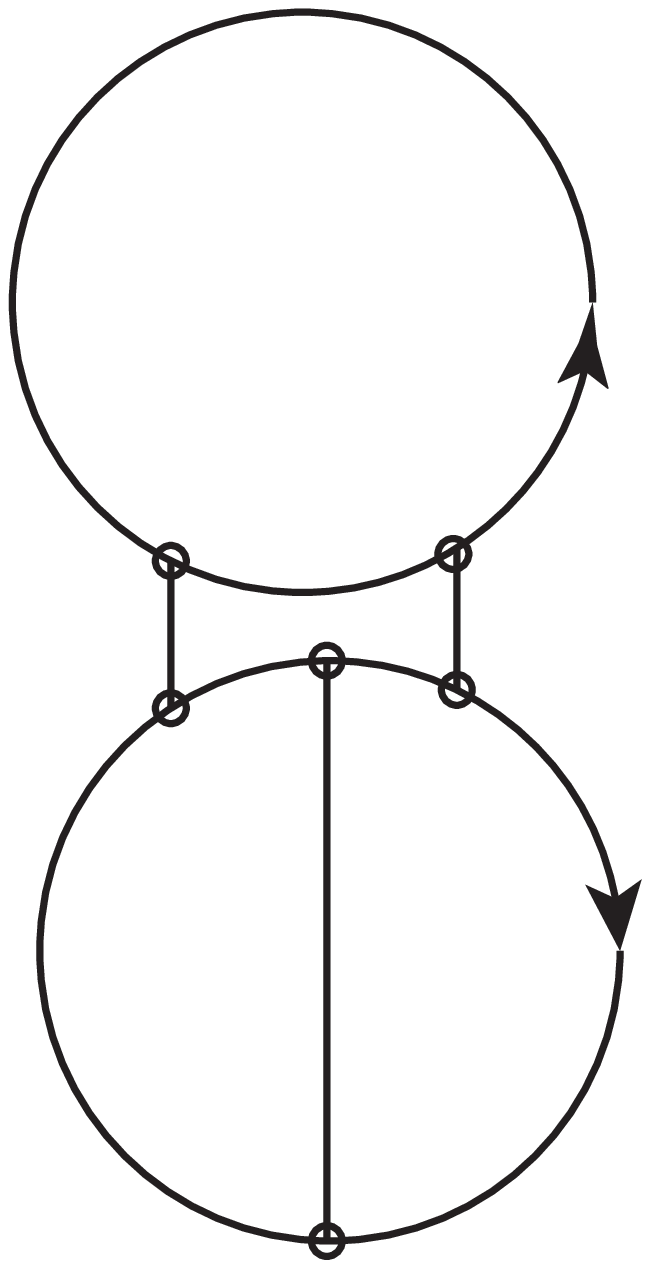}}}
\newcommand{\twocsthree}{\raisebox{-0.44\height}{\includegraphics[width=1.2cm]{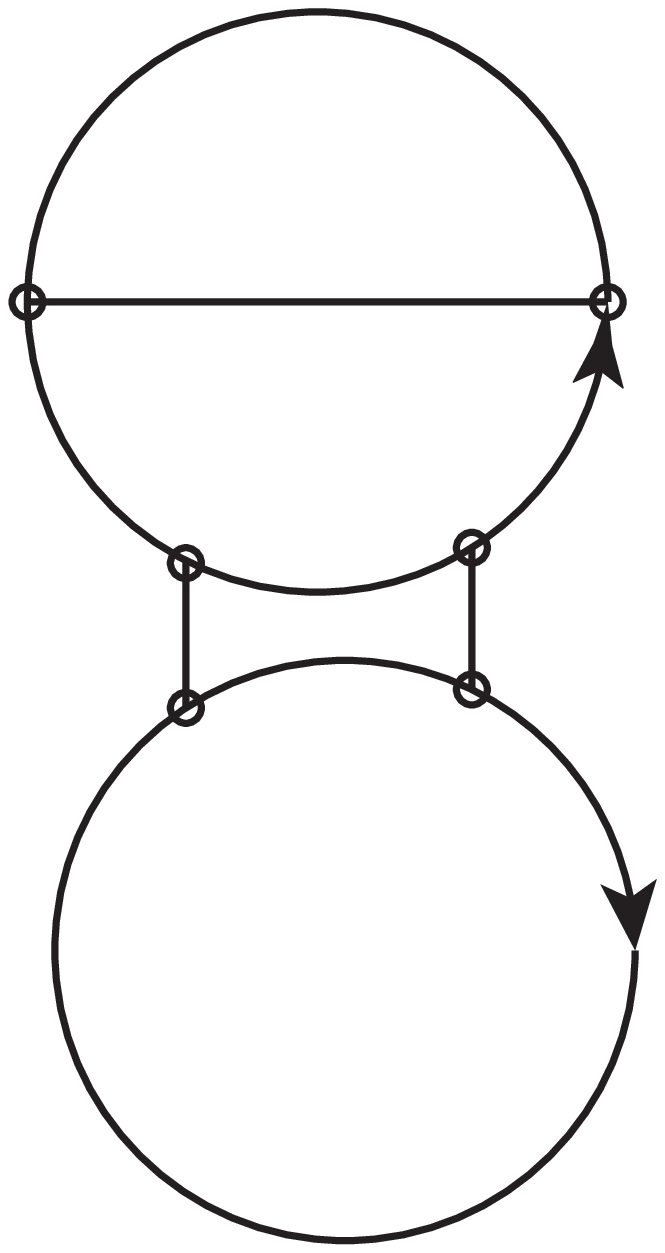}}}
\newcommand{\twocsfour}{\raisebox{-0.44\height}{\includegraphics[width=1.2cm]{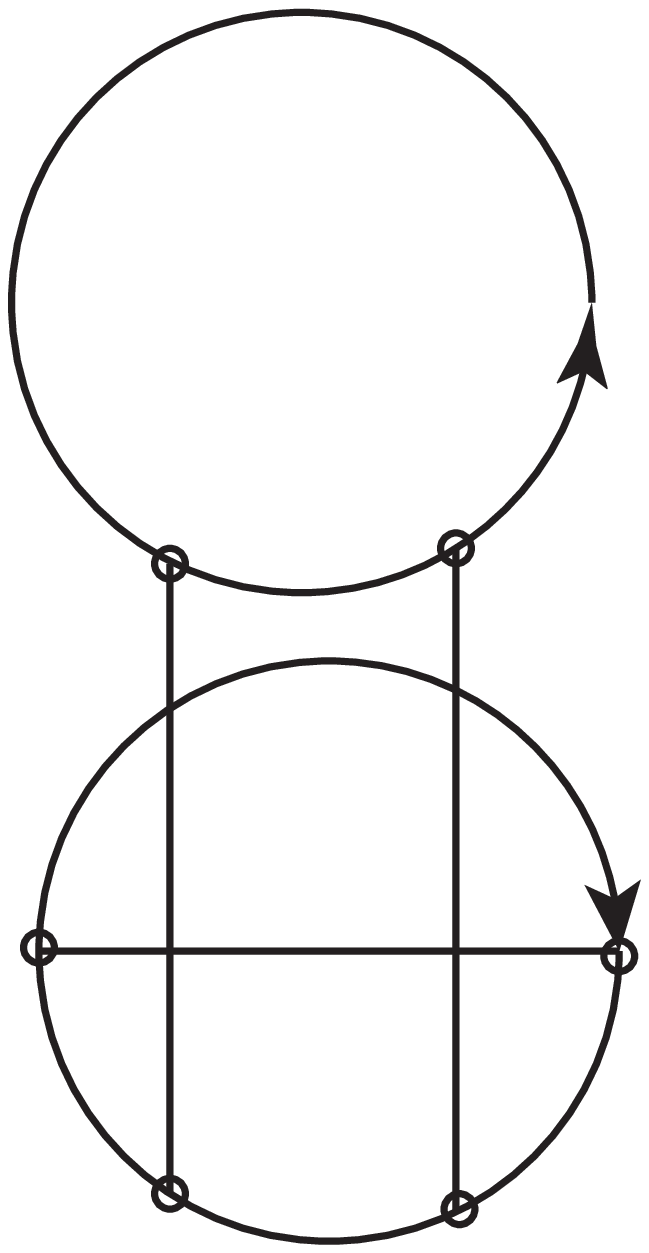}}}
\newcommand{\exfld}{\raisebox{-0.13\height}{\includegraphics[width=2.3cm]{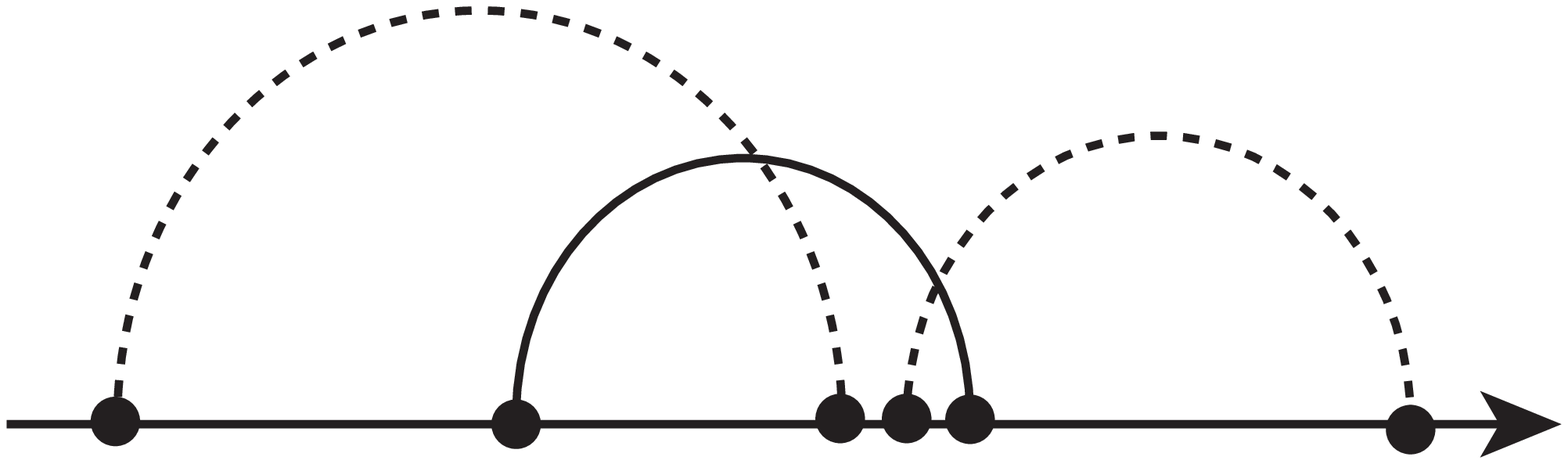}}}
\newcommand{\edldo}{\raisebox{-0.17\height}{\includegraphics[width=2.5cm]{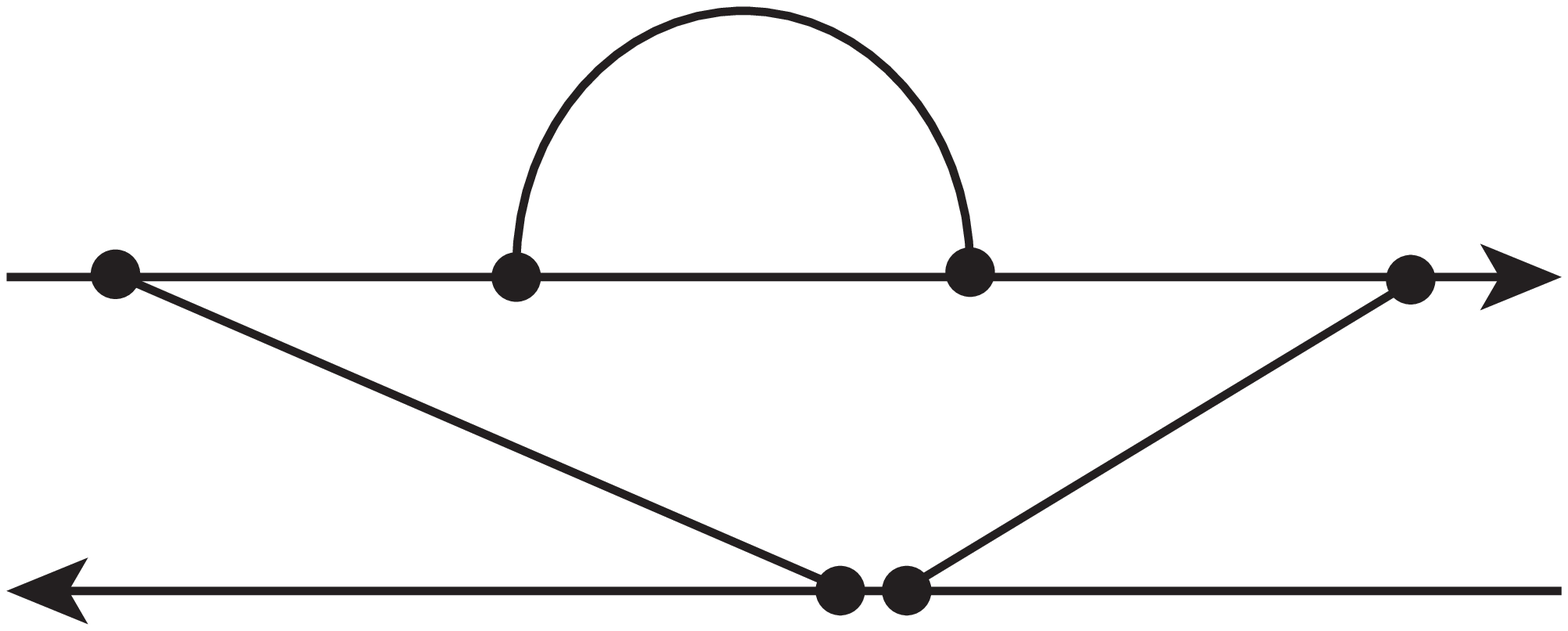}}}
\newcommand{\edldt}{\raisebox{-0.17\height}{\includegraphics[width=2.5cm]{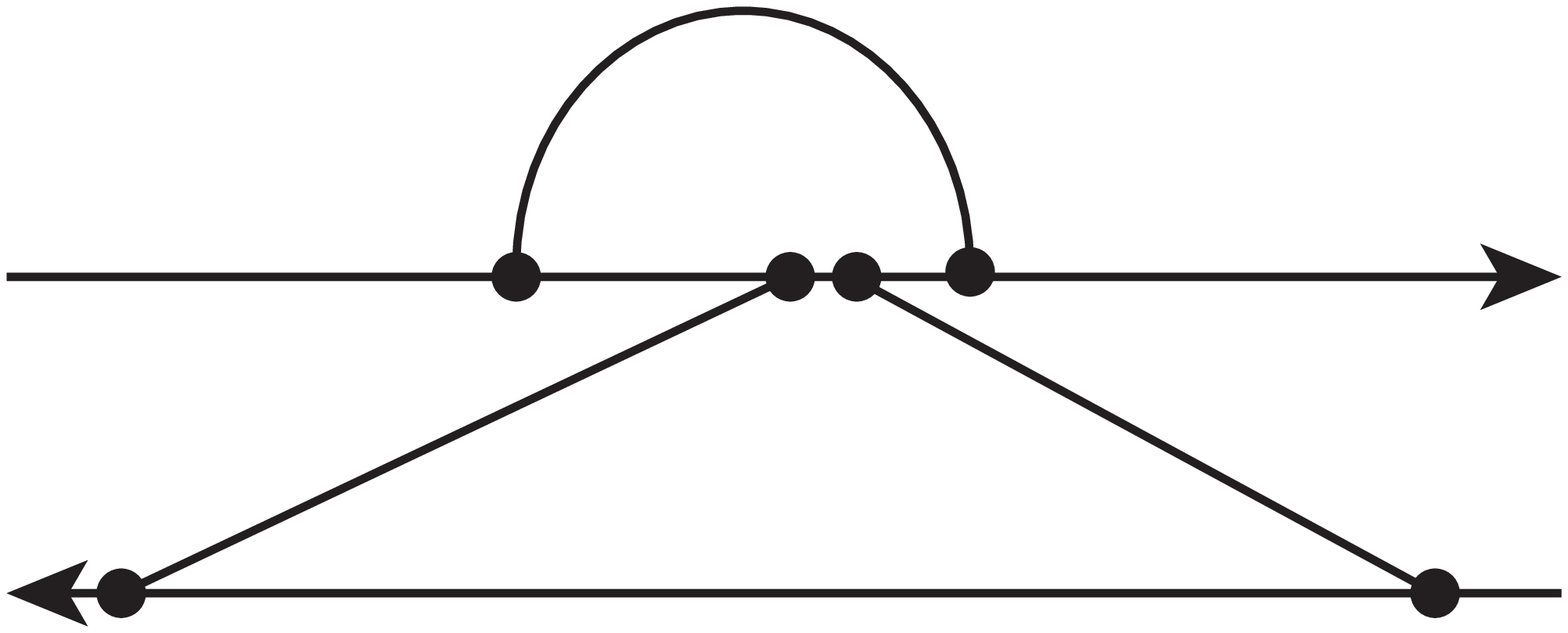}}}
\newcommand{\edldth}{\raisebox{-0.17\height}{\includegraphics[width=2.5cm]{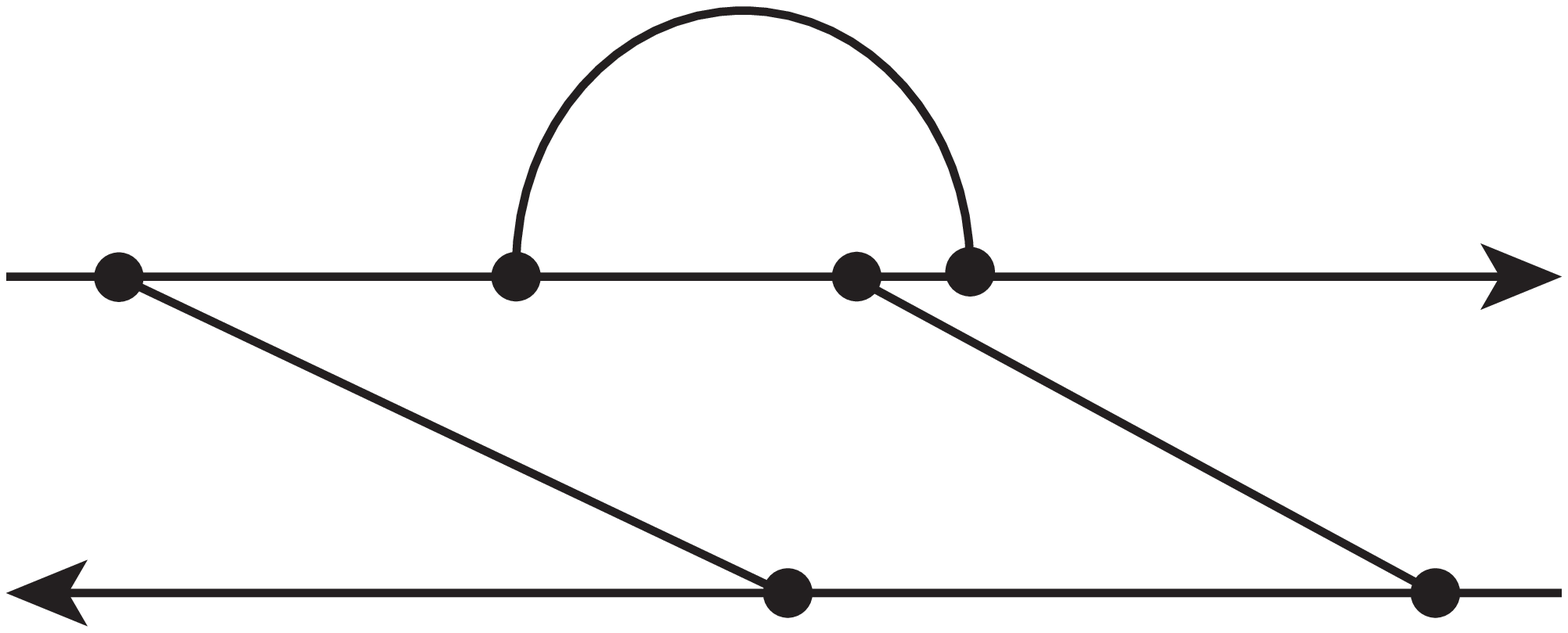}}}
\newcommand{\edldf}{\raisebox{-0.17\height}{\includegraphics[width=2.5cm]{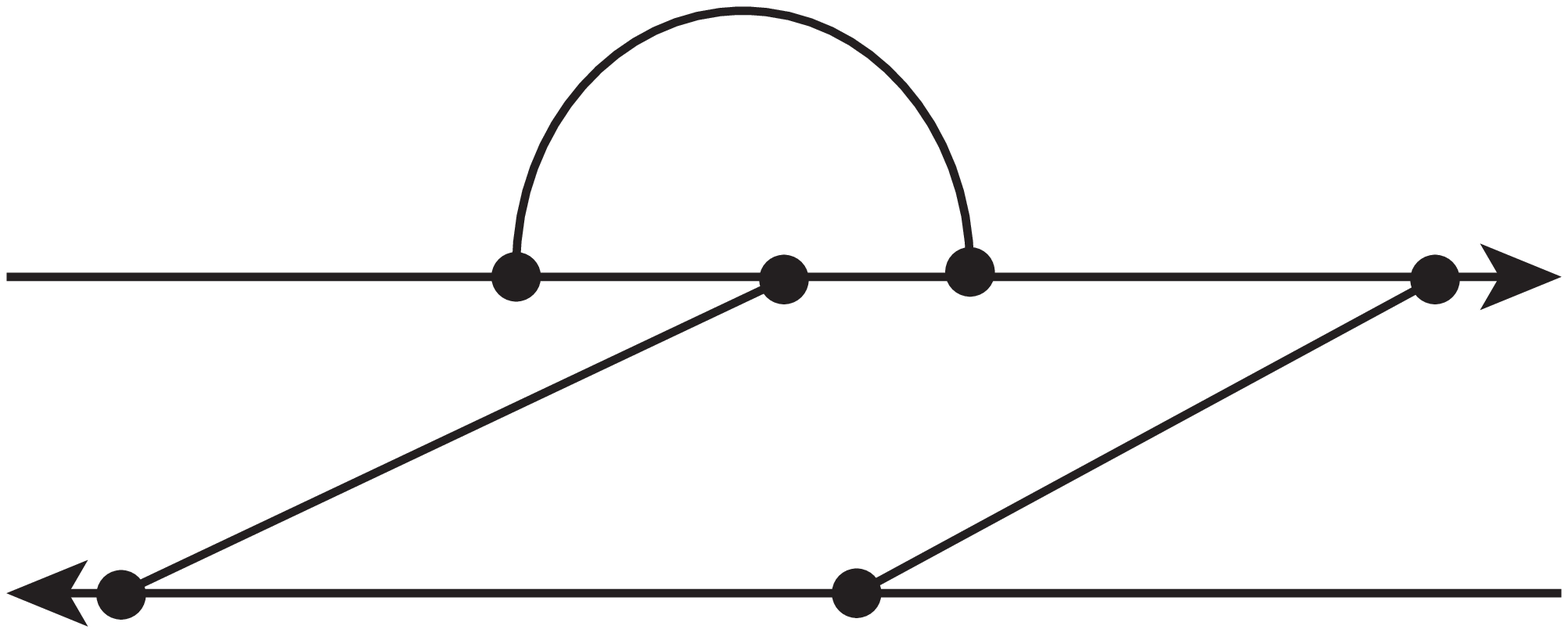}}}
\newcommand{\edldfv}{\raisebox{-0.26\height}{\includegraphics[width=2.5cm]{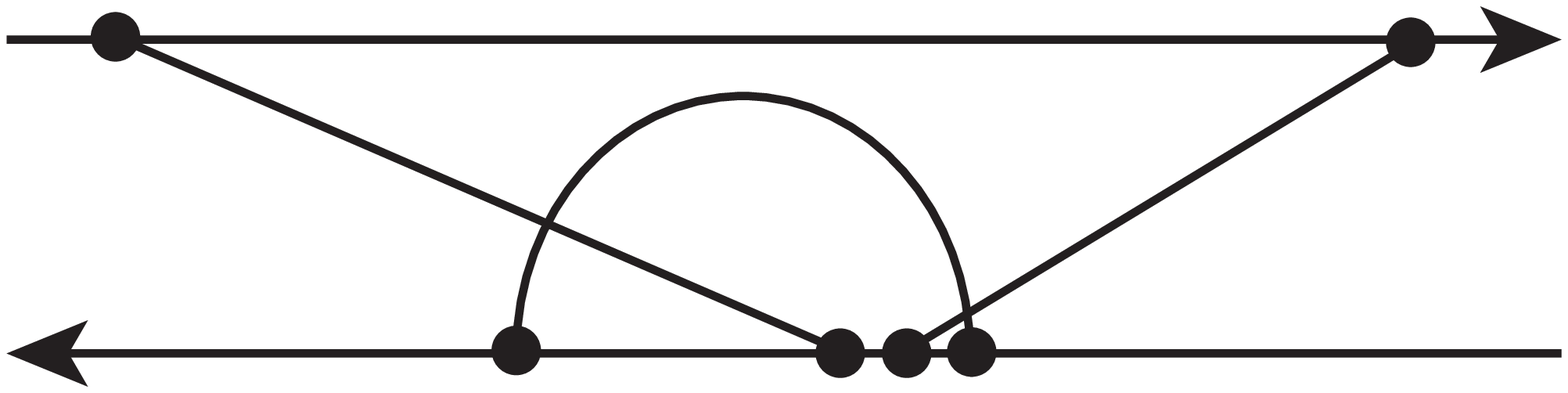}}}
\newcommand{\edlds}{\raisebox{-0.26\height}{\includegraphics[width=2.5cm]{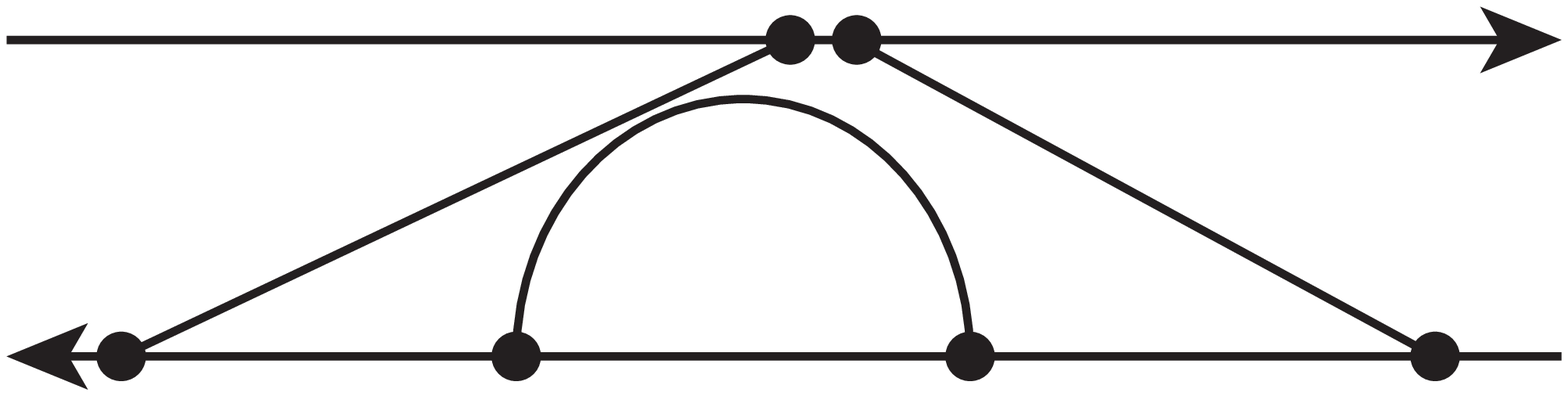}}}
\newcommand{\edldse}{\raisebox{-0.26\height}{\includegraphics[width=2.5cm]{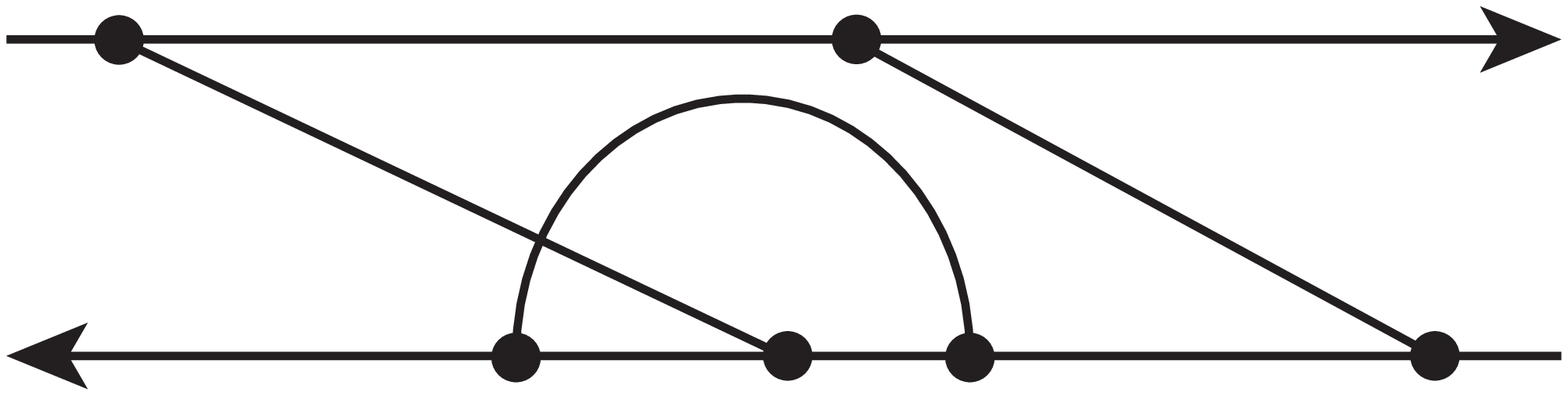}}}
\newcommand{\edlde}{\raisebox{-0.26\height}{\includegraphics[width=2.5cm]{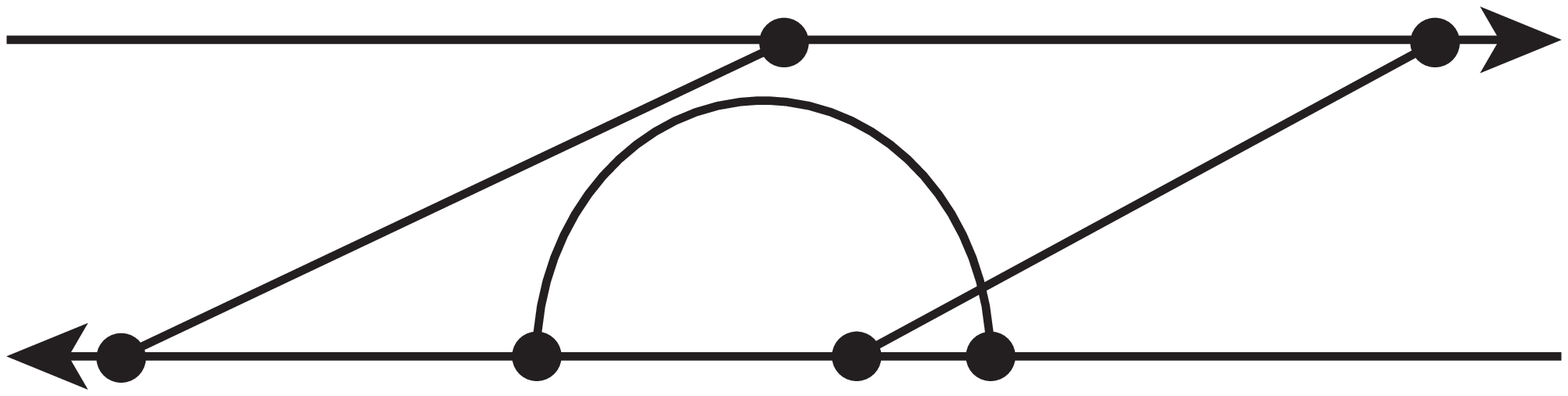}}}
\newcommand{\lfdone}{\raisebox{-0.17\height}{\includegraphics[width=2.5cm]{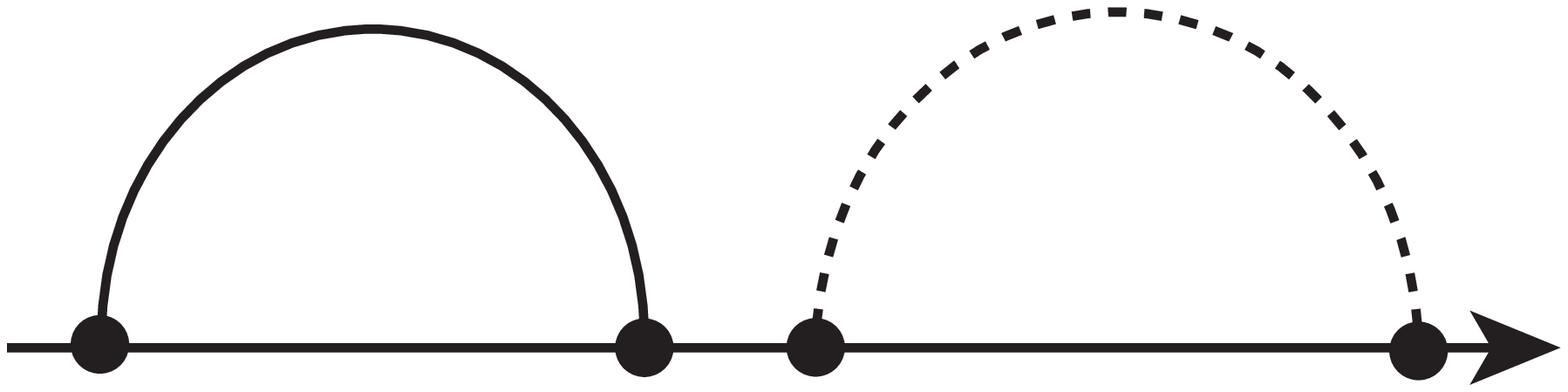}}}
\newcommand{\lfdtwo}{\raisebox{-0.17\height}{\includegraphics[width=2.5cm]{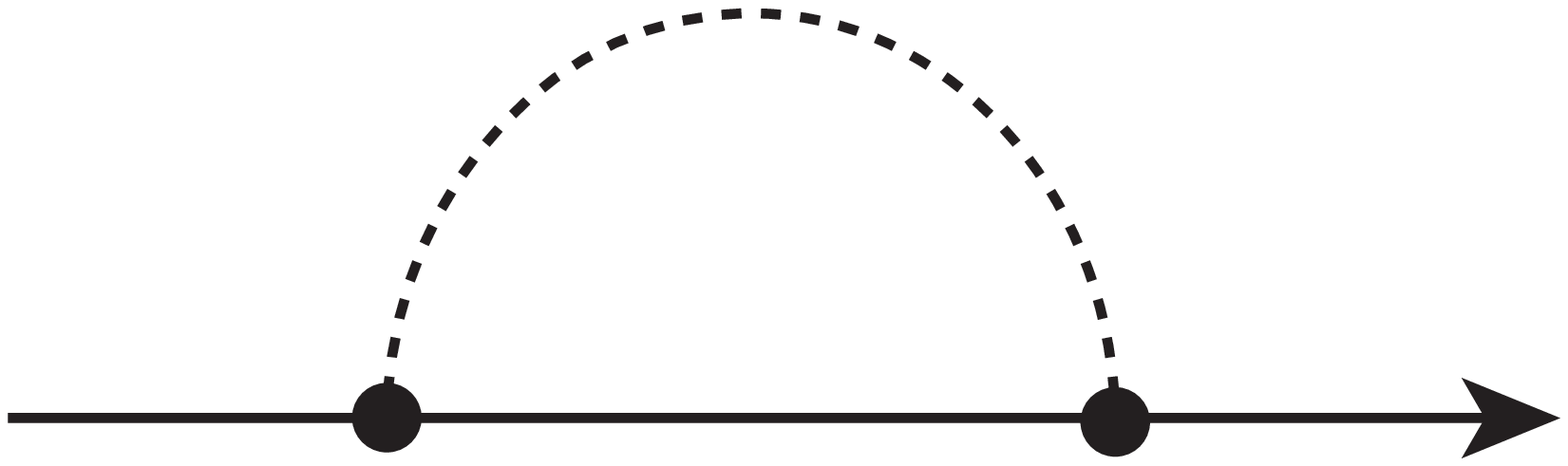}}}
\newcommand{\fconsum}{\raisebox{-0.17\height}{\includegraphics[width=3.5cm]{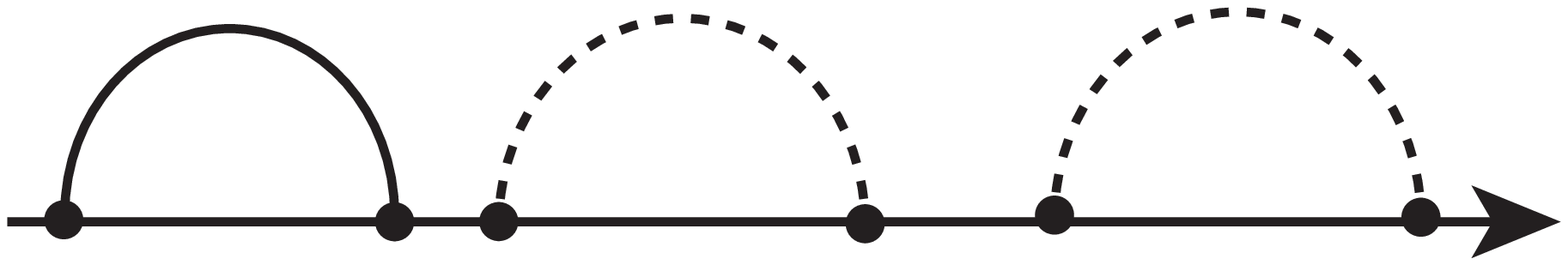}}}
\newcommand{\sconsum}{\raisebox{-0.17\height}{\includegraphics[width=3.5cm]{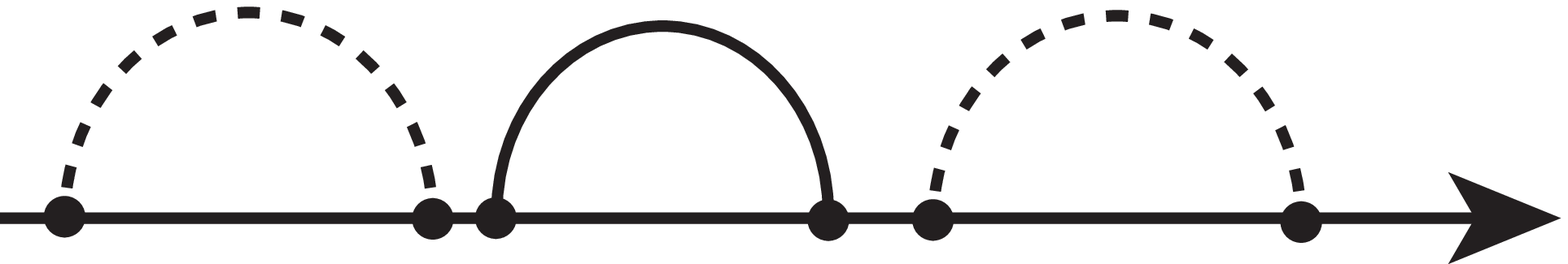}}}
\title{A parity map of framed chord diagrams}
\author{Denis Petrovich Ilyutko\footnote{Partially supported by grants of RF
President NSh -- 1410.2012.1, RFBR 13-01-00664-a, 13-01-00830-a,
14-01-91161 and 14-01-31288.}, Vassily Olegovich
Manturov\footnote{Partially supported by grants of  the Russian
Government 14.Z50.31.0020, RF President NSh -- 1410.2012.1, RFBR
13-01-00830-a 14-01-91161 and 14-01-31288.}}
\begin{document}
\date{}

\maketitle

\abstract{We consider framed chord diagrams, i.e.\ chord diagrams
with chords of two types. It is well known that chord diagrams
modulo $4$T-relations admit Hopf algebra structure, where the
multiplication is given by any connected sum with respect to the
orientation. But in the case of framed chord diagrams a natural way
to define a multiplication is not known yet. In the present paper,
we first define a new module $\mathcal{M}_2$ which is generated by
chord diagrams on two circles and factored by $4$T-relations. Then
we construct a ``parity'' map from the module of framed chord
diagrams into $\mathcal{M}_2$ and a weight system on
$\mathcal{M}_2$. Using the map and weight system we show that a
connected sum for framed chord diagrams is not a well-defined
operation. In the end of the paper we touch linear diagrams, the
circle replaced by a directed line.}


 \section {Introduction}


In  topology and graph theory, many notions often have their
``odd'', ``non-orientable'', ``framed'' counterparts, see for
example~\cite{Gor,IMN1,IMN2,Lan,Mant31,Mant32,Mant33,Mant34,Mant35,Mant37,Mant36,Mant38}.
Usually, even objects are better understood, however, in the odd
case, it is much easier to catch the non-trivial information. The
interrelation between the ``even'' and ``odd'' parts usually relies
upon some functorial mappings, coverings etc., and allows one to
understand better the real reason of various
effects~\cite{Mant35,Mant40,Mant41}.

It suffices to mention the three faces of the chord diagram theory:
the Gauss diagram approach and the rotating circuit approach for
four-valent graph~\cite{I-ko:f4vg,I-ko:eq,Mant22,Mant30,Nik_pVc},
and $J$-invariants of plane curves~\cite{Lan}. Also framed chord
diagram are used for describing the combinatorics of a non-generic
Legendrian knots in some $3$-manifolds.

The most famous face of chord diagrams is the role they play in the
chord diagram algebra~\cite{BN,BNG,ChDL,ChDM,Sob}. The weight
systems (i.e., linear functions on this algebra), due to
Vassiliev--Kontsevich theorem, lead to Vassiliev invariants of
knots, see~\cite{BN,ChDL,ChDM}. Note that the multiplication in the
chord diagram algebra is defined by taking a connected sum of two
chord diagrams. This operation is well defined up to $4$T-relation.
In the case of framed chord diagrams one can define $4$T-relations
and consider a connected sum of two framed diagrams. An attempt to
prove that this operation is well defined up to the $4$T-relations
fails.

The main goal of the present paper is to construct a ``parity'' map
from the set of framed chord diagrams, factored by $4$T-relations,
to a set of objects where chords of only one type are used. Using
this map and some invariant we demonstrate two examples of framed
chord diagrams having different images under the map. Therefore, a
connected sum is not a well-defined operation in the set of framed
chord diagram up to $4$T-relations. Note that one ``forgetful'' map
was defined in~\cite{Kar} for constructing framed weight systems.

The structure of the paper is as follows. In the next section we
recall all necessary facts about framed chord diagrams. In
Sec.~\ref{sec:dchd} we introduce the notion of a double chord
diagram, i.e.\ a chord diagram on two oriented circles, and define a
weight system for the module generated by double chord diagram.
Section~\ref{sec:map} is devoted to a map from the framed chord
diagrams module to the double chord diagrams module. Then, in
Sec.~\ref{sec:appl} we apply this map and some weight system for
proving that a connected sum of two chord diagrams is not a
well-defined operations on the set of framed chord diagrams modulo
$4$T-relations. At the end of the paper we generalize all
constructions for the case of linear diagrams, i.e.\ chord diagrams
on a directed line instead of an oriented circle.

\section{Framed chord diagrams}\label{sec:fchd}

Throughout the paper, all graphs are finite. Let $G$ be a graph with
the set of vertices $V(G)$ and the set of edges $E(G)$. We say that
a vertex $v\in V(G)$ has {\em degree} $k$ if $v$ is incident to $k$
edges. A graph whose vertices have the same degree $k$ is called
{\em regular $k$-valent} or a {\em $k$-graph}. For any $k$, the free
loop, i.e.\ the graph without vertices, is considered as a
$k$-graph.

  \begin{definition}
A {\em chord diagram} is a cubic graph consisting of a selected
oriented Hamiltonian cycle (the {\em core circle}) and several
non-oriented edges ({\em chords}) connecting points on the core
circle in such a way that every point on the core circle is incident
to at most one chord. A chord diagram is {\em framed} if a map (a
{\em framing}) from the set of chords to $\mathbb{Z}/2\mathbb{Z}$ is
given, i.e.\ every chord is endowed with $0$ or $1$.
  \end{definition}

 \begin{remark}
We consider all framed chord diagrams up to orientation and framing
preserving isomorphisms of graphs taking one core circle to the
other one. In pictures the core circles of chord diagrams are
oriented in counterclockwise manner. Chords having  framing $0$ are
solid chords, and those having framing $1$, are dashed ones, see
Fig.~\ref{exch}.
 \end{remark}

 \begin{figure}
  \centering\includegraphics[width=100pt]{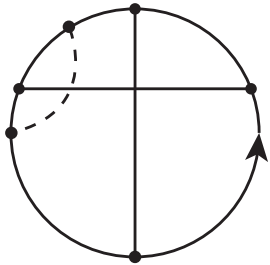}
  \caption{A framed chord diagram with two chords of framing $0$ and a chord of framing $1$}\label{exch}
 \end{figure}

Let $M^f$ be the free $\mathbb{Z}$-module generated by all framed
chord diagrams. Each element of $M^f$ is a finite linear combination
of framed chord diagrams with integer coefficients.

 \begin{definition}
The module $\mathcal{M}^f$ \cite{Lan} of framed chord diagrams is
the quotient module of $M^f$ modulo the relations shown in
Fig.~\ref{4trel}. We refer to these relations as to {\em
$4$T-relations}.
 \end{definition}

 \begin{figure}
  \centering\includegraphics[width=300pt]{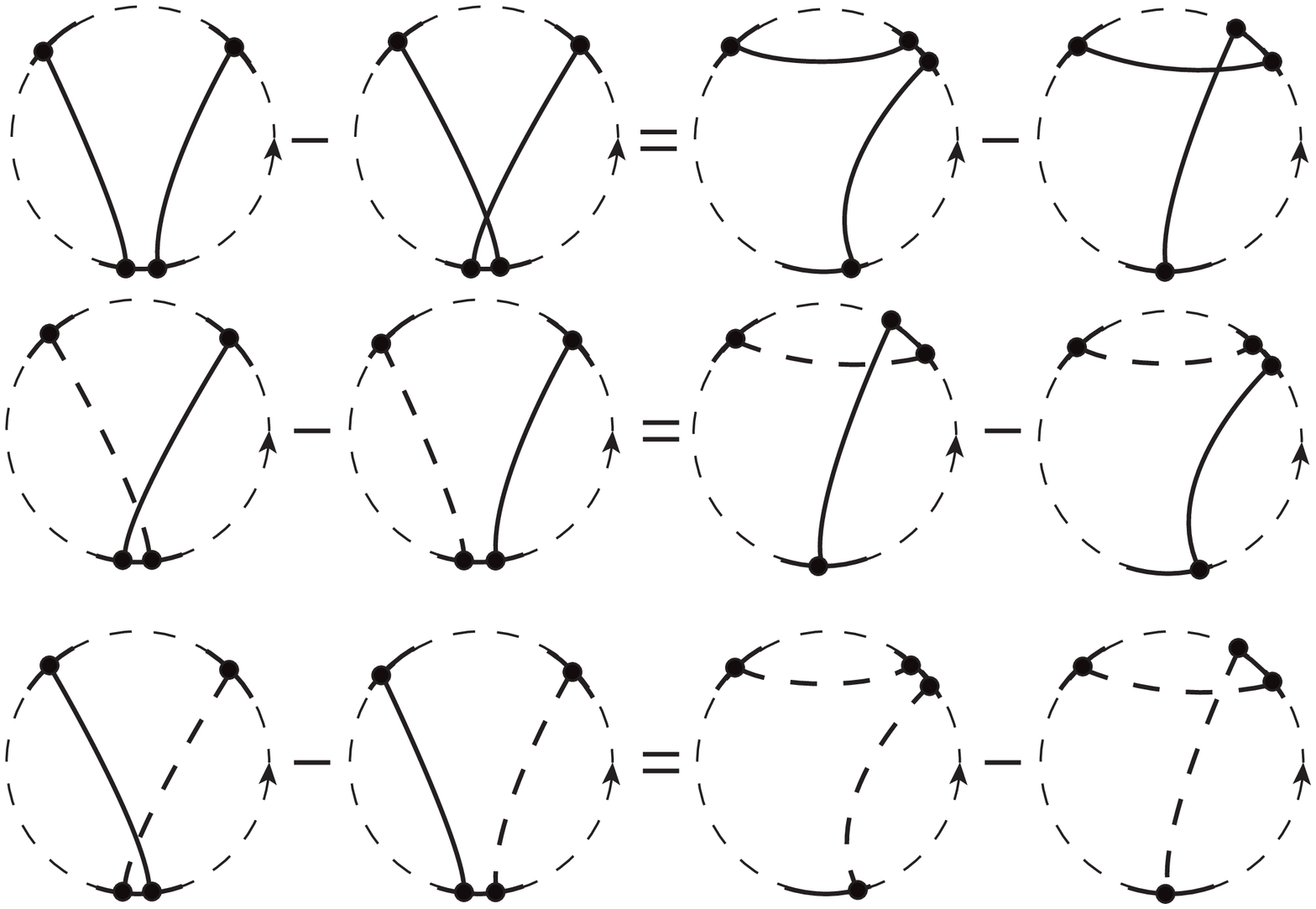}
  \caption{$4$T-relations for framed chord diagrams}\label{4trel}
 \end{figure}

 \begin{remark}
The pictures in Fig.~\ref{4trel} should be understood as follows. It
is assumed that the endpoints of other chords can lie only in the
dashed parts of the core circle and the combinatorial structure of
chords not depicted in the pictures, is the same for all the four
diagrams constituting the relation.
 \end{remark}

We can consider chord diagrams only with framings $0$ and the
corresponding $4$T-relation. As a result we obtain the submodule
$\mathcal{M}$ of $\mathcal{M}^f$, see~\cite{Kar}.

Considering the field $\mathbb{R}$ instead of $\mathbb{Z}$ we can
obtain a commutative cocommutative Hopf algebra $\mathcal{A}$ of
chord diagrams~\cite{ChDM}. The space $\mathcal{A}$ is endowed with
a natural product $m\colon
\mathcal{A}\otimes\mathcal{A}\to\mathcal{A}$, a natural coproduct
$\Delta\colon\mathcal{A}\to\mathcal{A}\otimes\mathcal{A}$, the unit
$e\colon\mathbb{R}\to\mathcal{A}$, the counit
$\epsilon\colon\mathcal{A}\to\mathbb{R}$ and the antipode
$\mathcal{S}\colon\mathcal{A}\to\mathcal{A}$. The multiplication is
given by gluing two diagrams, i.e.\ a connected sum, and the
coproduct is given by summing up the tensor products of pairs of
chord diagrams formed by a decomposition of the set of chords into
two complimentary subsets. Analogously, the space $\mathcal{A}^f$ of
framed chord diagrams is endowed with the comultiplication
transforming it to the coassociative cocommutative
coalgebra~\cite{Kar,Lan}.

\section{Double chord diagrams}\label{sec:dchd}

In this section we construct chord diagrams on two circles.
Throughout this section all chords have the framing $1$.

 \subsection{Basic definitions}

 \begin{definition}
A {\em double chord diagram} is a cubic graph consisting of two
oriented disjoint circles (the {\em core circles}) and several
non-oriented edges ({\em chords}) connecting points on the core
circles in such a way that every point on a core circle is incident
to at most one chord.
 \end{definition}

 \begin{remark}
Double chord diagrams are considered up to isomorphisms of graphs
preserving the orientations of two core circles.
 \end{remark}

Define the $\mathbb{Z}$-module $M_2$ as the set of finite
$\mathbb{Z}$-linear combinations of double chord diagrams.

On the set of all double chord diagrams we can define relations
analogous to the $4$T-relation on chord diagrams without chords with
framing $1$. The difference in the definition of these moves is the
following. In the case of double chord diagrams three pieces
containing the endpoints of two singled chords may lie in the two
core circles. We refer to these relations, see Fig.~\ref{4treld}, as
to  {\em $4$T-relations}.

 \begin{figure}
  \centering\includegraphics[width=300pt]{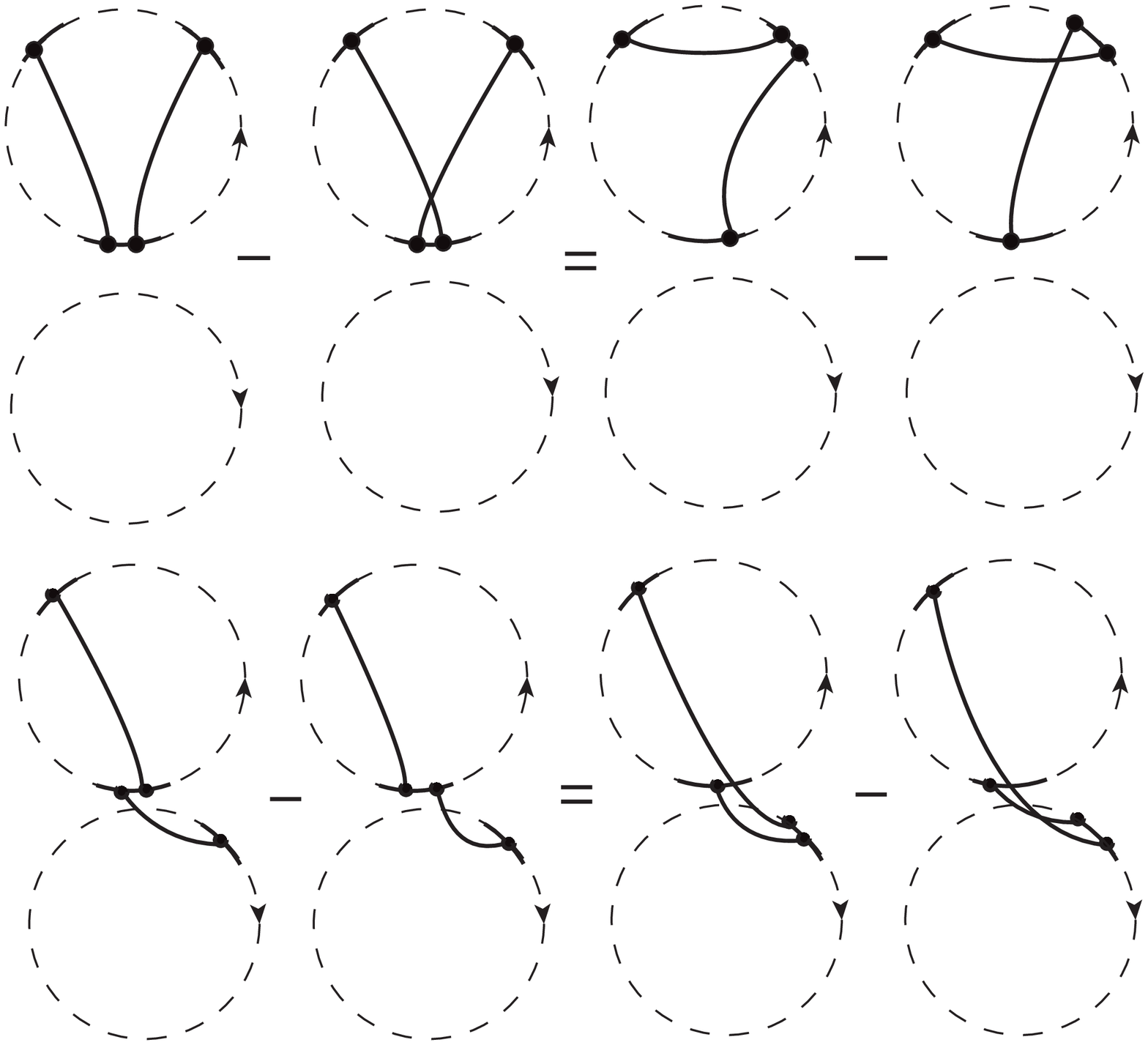}
  \caption{$4$T-relations for double chord diagrams}\label{4treld}
 \end{figure}

 \begin{definition}
The module $\mathcal{M}_2$ of double chord diagrams is the quotient
module of $M_2$ modulo the $4$T-relations.
 \end{definition}

 \subsection{A weight system}\label{subswc:ws}

We recall that a linear map from the space of framed chord diagrams
satisfying $4$T-relations is called a {\em weight system}.

 \begin{definition}
We call any linear map from the space of double chord diagrams
satisfying $4$T-relations a {\em weight system} on the set of double
chord diagrams.
 \end{definition}

Let us construct one example of a weight system on  $\mathcal{M}_2$.

Let us first define a {\em surgery along the set of chords} of a
double chord diagram, as it was done for chord diagrams and framed
chord diagrams in
see~\cite{BNG,CL,I-ko:f4vg,I-ko:eq,Lan,Mel,Mo,Sob,Tr}.

Let $\mathcal{D}$ be a double chord diagram. For every chord
belonging to one core circle we draw a parallel chord near it and
remove the small arcs of the core circle between adjacent ends of
the chords. For every chord with endpoints on two core circles we
replace it with two chords in such a way that after removing the
small arcs of the core circles between adjacent ends of the chords
the orientations of the core circles are coherent, see
Fig.~\ref{surgd}.

By a small perturbation, the picture in $\mathbb{R}^{2}$ is
transformed into a one-manifold $N(\mathcal{D})$ in
$\mathbb{R}^{3}$. Let $\beta_{\mathcal{D}}$ be the number of
connected components of $N(\mathcal{D})$.

 \begin{figure}
  \centering\includegraphics[width=200pt]{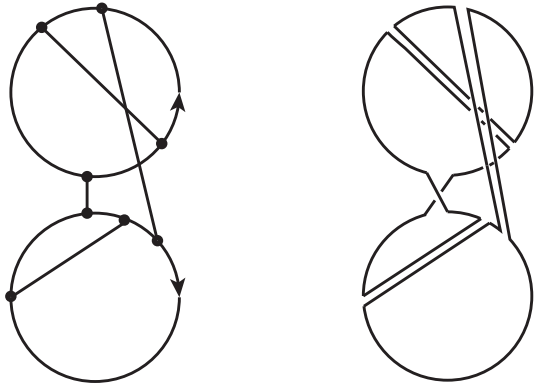}
  \caption{The surgery}\label{surgd}
 \end{figure}

The proof of the following theorem is analogous to the proof of the
corresponding theorem for framed chord diagrams, see~\cite{Mel}.

 \begin{theorem}\label{th:ws}
Let $\mathcal{D}$ be a double chord diagram. Then
$\beta_{\mathcal{D}}$ is invariant under $2$T-relations{\em,} see
Fig.~{\em\ref{2Treld}}.
 \end{theorem}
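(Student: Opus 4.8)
The plan is to follow, with the modifications forced by the presence of two core circles, the argument used for framed chord diagrams on one circle in~\cite{Mel}. The key observation is that a $2$T-relation is a \emph{local} move: it alters $\mathcal{D}$ only inside a small part of the core circles, replacing one standard fragment (a few arcs carrying the endpoints of the two distinguished chords) by another standard fragment, while all other chords and the distribution of their endpoints over the dashed regions are left untouched. After the small perturbation that produces $N(\mathcal{D})\subset\mathbb{R}^{3}$, this means that if $\mathcal{D}'$ is obtained from $\mathcal{D}$ by one $2$T-move, then $N(\mathcal{D})$ and $N(\mathcal{D}')$ are obtained by gluing the \emph{same} exterior tangle to two tangles $T$ and $T'$ lying in a fixed ball $B\subset\mathbb{R}^{3}$ and having the same endpoints on $\partial B$. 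Since a closed one-manifold presented as a union of two tangles is a disjoint union of circles determined by the cyclic orbits of the composition of the two boundary matchings, the number of its components depends only on these two matchings; hence it suffices to prove that $T$ and $T'$ induce the same matching of the points of $\partial B$.

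The next step is to record the local effect of the surgery. A chord with both endpoints on one core circle becomes, after the surgery, a pair of parallel strands joined to the core circle along the two deleted small arcs --- a fixed elementary tangle; a chord joining the two core circles becomes the ``recohered'' pair of strands of Fig.~\ref{surgd}, whose precise form is dictated by the requirement that the orientations of the core circles agree after the small arcs are removed. Inserting these elementary pieces into the fragment affected by the $2$T-move yields, in each instance, an explicit tangle in $B$ whose boundary matching can be read off. I would then run through the configurations of a $2$T-move, organised by how the two distinguished chords meet the core circles --- both on the first, both on the second, one on each, or one of them joining the two circles --- and in each case compare the matchings produced on the $\mathcal{D}$-side and on the $\mathcal{D}'$-side. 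Each of the finitely many cases reduces to a small picture check, after which $N(\mathcal{D})\cong N(\mathcal{D}')$ and therefore $\beta_{\mathcal{D}}=\beta_{\mathcal{D}'}$; applying this along a chain of $2$T-moves relating two $2$T-equivalent diagrams gives the theorem.

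The step I expect to be the main obstacle is exactly the case analysis for configurations in which one of the distinguished chords joins the two core circles. There the ``remove the small arcs so that the orientations become coherent'' prescription forces a specific, orientation-dependent recohering, and one must check carefully that the local tangle it produces is the one matching the tangle obtained on the $\mathcal{D}'$-side; this is the point at which the double--circle situation genuinely departs from the one-circle argument of~\cite{Mel}, the purely single--circle configurations being handled exactly as there. A routine point to dispose of along the way is that endpoints of other chords lying in the dashed regions of the fragment are harmless: after the surgery they contribute only parallel through-strands inside $B$, which do not affect the boundary matching.
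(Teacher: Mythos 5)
Your proposal is essentially the approach the paper intends: the paper gives no written proof, deferring to the single-circle argument of~\cite{Mel}, and your reduction of the $2$T-move to a local tangle replacement followed by a comparison of the induced boundary matchings (with a finite case analysis over how the two distinguished chords meet the core circles) is exactly the adaptation of that argument to two circles. The only point worth making explicit in the case check is that the surgered local tangles contribute no closed components entirely inside the ball, so that equality of boundary matchings really does give $N(\mathcal{D})\cong N(\mathcal{D}')$.
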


 \begin{figure}
  \centering\includegraphics[width=300pt]{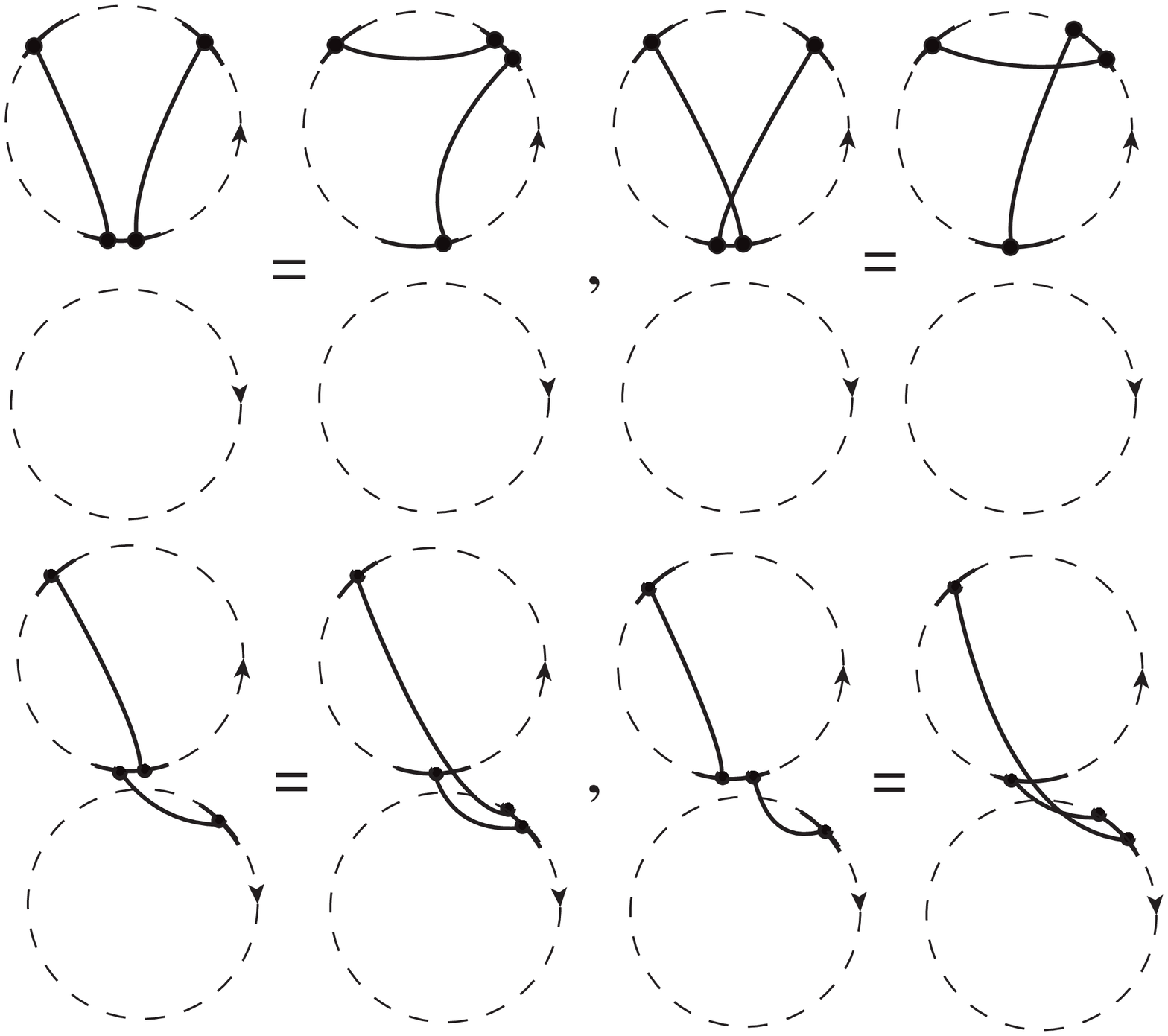}
  \caption{$2$T-relations for double chord diagrams}\label{2Treld}
 \end{figure}

 \begin{corollary}
The number $\beta_{\mathcal{D}}$ is invariant under $4$T-relations.
 \end{corollary}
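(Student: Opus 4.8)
The plan is to deduce the $4$T-invariance of $\beta_{\mathcal D}$ from Theorem~\ref{th:ws} by exhibiting each $4$T-relation as a consequence of $2$T-relations. Recall that, as in the classical setting for framed chord diagrams, a $4$T-relation equates an alternating sum of four diagrams $\mathcal D_1-\mathcal D_2-\mathcal D_3+\mathcal D_4$ to zero, and the standard trick is that the four terms split into two pairs, each pair differing by a $2$T-move. Concretely, I would first observe that in each of the four diagrams of a $4$T-relation there is a distinguished chord $c$ (one of the two ``singled'' chords) whose endpoint is being slid past the two endpoints of the other singled chord; pairing the diagrams so that within a pair the position of $c$ relative to the \emph{rest} of the diagram is the only thing that changes, one sees that $\mathcal D_1$ and $\mathcal D_2$ (resp.\ $\mathcal D_3$ and $\mathcal D_4$) are related by a single $2$T-move. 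By Theorem~\ref{th:ws}, $\beta_{\mathcal D_1}=\beta_{\mathcal D_2}$ and $\beta_{\mathcal D_3}=\beta_{\mathcal D_4}$, hence $\beta_{\mathcal D_1}-\beta_{\mathcal D_2}-\beta_{\mathcal D_3}+\beta_{\mathcal D_4}=0$, which is exactly the statement that $\beta$ descends to $\mathcal M_2$.

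The one genuine subtlety, compared with the ordinary chord-diagram case, is that for double chord diagrams the three arcs carrying the relevant endpoints may be distributed between the two core circles in several combinatorially distinct ways (this is precisely the point emphasised in the text preceding Fig.~\ref{4treld}). So the argument above must be checked case by case according to which core circle each of the three distinguished arcs lies on. I would enumerate these cases, note that the pairing of the four diagrams into two $2$T-related pairs can always be chosen, and verify that in every case the surgered one-manifolds $N(\mathcal D_i)$ are changed, within a pair, by exactly the local modification appearing in the $2$T-relation of Fig.~\ref{2Treld} — whose effect on the number of components is already controlled by Theorem~\ref{th:ws}. Because $\beta$ counts connected components of a one-manifold, it is insensitive to everything outside the small disks where the move takes place, so the local check suffices.

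The main obstacle I anticipate is purely bookkeeping: making sure that in every distribution of the three arcs among the two circles the ``right'' pairing of the four $4$T-terms is the one realising two $2$T-moves (a wrong pairing need not work), and that the orientation-coherence conventions built into the surgery (the special rule for chords joining the two circles) are respected throughout. None of this requires a new idea beyond Theorem~\ref{th:ws}; once the pairing is fixed correctly, each equality $\beta_{\mathcal D_i}=\beta_{\mathcal D_j}$ is immediate, and summing with the signs $+,-,-,+$ of the $4$T-relation gives the result. Hence $\beta_{\mathcal D}$ is a well-defined function on $\mathcal M_2$, i.e.\ a weight system on the set of double chord diagrams.
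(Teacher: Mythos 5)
Your proposal is correct and follows essentially the same route as the paper, which states the corollary without proof precisely because each $4$T-relation splits into two pairs of diagrams each related by a $2$T-move, so Theorem~\ref{th:ws} applies pairwise and the alternating sum of the $\beta$-values vanishes. The case analysis you flag over the distribution of the three arcs between the two core circles is exactly the only bookkeeping the implicit argument requires.
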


Define a map $w\colon\mathcal{M}_2\to\mathbb{Z}$ by putting
 $$
w(\alpha_1\mathcal{D}_1+\ldots+\alpha_k\mathcal{D}_k)=\alpha_1\beta_{\mathcal{D}_1}+\ldots+\alpha_k\beta_{\mathcal{D}_k},
 $$
where $\alpha_i\in\mathbb{Z}$. From Theorem~\ref{th:ws} we see that
$w$ is a weight system on $\mathcal{M}_2$.

 \section{A map from the module $\mathcal{M}^f$ to the module
 $\mathcal{M}_2$}\label{sec:map}

It turns out that there is a well-defined map $\psi$ from the module
$\mathcal{M}^f$ to the module $\mathcal{M}_2$.





Let  us first define $\psi$ on a framed chord diagram $D$ with a
core circle $C$ and $n$ chords. Construct double chord diagrams with
the core circles $C_1$ and $C_2$ to be $C$ as follows. For each
chord $d$ of $D$ consider the two positions of it in $C_1$ and
$C_2$. Namely, if $d$ has framing $0$, then it can have its both
endpoints either on $C_1$ or $C_2$. If $d$ has framing $1$, then its
endpoints lie on different core circles. This leads to $2^n$
possible choices. We define $\psi(D)$ to be the sum of these $2^n$
summands but the orientation of $C_2$ is reversed.

Then we extend the map $\psi$ by linearity.

 \begin{example}
Let $D$ be a framed chord diagram depicted in Fig.~\ref{exch}. Then
 \begin{align*}
\psi(D)&=\psi\left(\exam\right)=\exone+\extwo+\exthree+\exfour\\
&+\exfive+\exsix+\exseven+\exeight.
 \end{align*}
 \end{example}

 \begin{theorem}
The map $\psi\colon\mathcal{M}^f\to\mathcal{M}_2$ is well defined.
 \end{theorem}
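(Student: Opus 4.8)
The map $\psi$ is defined on generators of $M^f$ (free module on framed chord diagrams), so to show it descends to $\mathcal{M}^f$ it suffices to verify that $\psi$ sends each $4$T-relation in $\mathcal{M}^f$ to a $\mathbb{Z}$-linear combination of $4$T-relations in $\mathcal{M}_2$ (hence to zero in $\mathcal{M}_2$). Fix a $4$T-relation, i.e.\ an alternating sum $D_1-D_2+D_3-D_4$ of four framed chord diagrams that differ only in how two distinguished chords $a,b$ attach to three small arcs $\alpha,\beta,\gamma$ of the core circle $C$; all other chords, call them $c_1,\dots,c_{n-2}$, and their framings are identical across the four diagrams. The plan is to expand $\psi$ on each $D_i$ as a sum over the $2^n$ sign/position choices, split the index set into the choices for $a,b$ and the choices $\sigma$ for the remaining chords $c_1,\dots,c_{n-2}$, and then, for each \emph{fixed} $\sigma$, collect the contributions of $D_1,D_2,D_3,D_4$.

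For a fixed placement $\sigma$ of the chords $c_1,\dots,c_{n-2}$ onto $C_1$ and $C_2$, each of the three arcs $\alpha,\beta,\gamma$ acquires a definite home: it lies either on $C_1$ or on $C_2$ depending on how the nearby $c_j$-endpoints were distributed. Now sum over the (two or four, according to framings of $a$ and $b$) placement choices for $a$ and $b$. The key observation to nail down is that, with $\sigma$ fixed, the resulting alternating sum $\psi_\sigma(D_1)-\psi_\sigma(D_2)+\psi_\sigma(D_3)-\psi_\sigma(D_4)$ of double chord diagrams is precisely a $4$T-relation for double chord diagrams as depicted in Fig.~\ref{4treld} — the one in which the three arcs involved may sit on either circle exactly as dictated by $\sigma$. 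This is the content of the remark that the double-circle $4$T-relations allow the three arcs to be distributed among the two core circles in all possible ways. One has to check a small finite number of cases according to (i) the framings of $a$ and $b$ and (ii) which of $C_1,C_2$ each arc $\alpha,\beta,\gamma$ landed on; the orientation-reversal of $C_2$ is harmless because it is applied uniformly to all four diagrams and to the defining figure of the relation. Summing over all $\sigma$ then gives $\psi(D_1-D_2+D_3-D_4)=0$ in $\mathcal{M}_2$.

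In carrying this out I would organize the casework by the framing pattern of the pair $(a,b)$: when both have framing $0$, the placement of each of $a,b$ is a binary choice (both endpoints on $C_1$ or both on $C_2$), and one checks the $4$T pattern of Fig.~\ref{4trel} maps, arc-for-arc, to the corresponding pattern of Fig.~\ref{4treld}; when one or both have framing $1$, an endpoint is forced onto the opposite circle, and one must confirm that the four double diagrams still line up as a valid double-circle $4$T-relation (possibly after using that such relations are listed in Fig.~\ref{4treld} for all arc distributions). It is worth spelling out one representative case in each framing class and remarking that the others are completely analogous, rather than enumerating all of them.

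\textbf{Main obstacle.} The essential difficulty is bookkeeping rather than conceptual: one must set up notation carefully enough that, for a fixed $\sigma$, the correspondence ``arc of $C$ $\leftrightarrow$ arc of $C_1$ or $C_2$'' is unambiguous, and then recognize the image alternating sum as \emph{literally} one of the $4$T-relations in Fig.~\ref{4treld} (including the mixed cases where the two singled chords have endpoints on both circles). The subtle point to get right is that when a singled chord has framing $1$, its two halves go to two different circles, so the ``local picture'' near the three arcs genuinely involves both core circles, and one must verify the resulting four-term combination is among the double-diagram $4$T-relations as drawn — this is exactly why the double-diagram $4$T-relations were defined to allow the three pieces to lie on two core circles. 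Once this identification is made for each framing class, summation over $\sigma$ finishes the proof, and linearity extends $\psi$ to all of $\mathcal{M}^f$.
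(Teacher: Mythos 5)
Your strategy is the same as the paper's: since $\psi$ is defined on generators and extended linearly, well-definedness reduces to checking that each framed $4$T-relation maps to zero in $\mathcal{M}_2$, which the paper does by expanding $\psi$ on the four local pictures of a representative relation and matching terms against the double-diagram $4$T-relations; so in outline your plan is correct and matches the paper. Two points in your bookkeeping need repair before the casework would go through. First, the claim that ``each of the three arcs $\alpha,\beta,\gamma$ acquires a definite home ... depending on how the nearby $c_j$-endpoints were distributed'' is backwards: both $C_1$ and $C_2$ are full copies of $C$, so every arc is present on both circles, and the choice $\sigma$ for the spectator chords is irrelevant to the local picture. What determines which circle the relevant endpoints sit on is the placement choice for the two distinguished chords $a,b$ themselves (forced to split when the framing is $1$). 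Second, for fixed $\sigma$ the alternating sum over all placements of $a,b$ is not ``precisely a $4$T-relation'' but a sum of up to four four-term blocks, one per placement of $(a,b)$; some blocks are genuine instances of the double $4$T-relations of Fig.~\ref{4treld}, while others (e.g.\ when $a$ lands entirely on $C_1$ and $b$ entirely on $C_2$, so their endpoints never interact) degenerate into pairs of literally isomorphic diagrams that cancel. This is exactly what one sees in the paper's computation, where the first and fourth summands of the two left-hand expansions coincide and drop out, and only the remaining blocks are matched via relations in $\mathcal{M}_2$. With those corrections the finite case check you describe is the paper's proof.
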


 \begin{proof}
We just have to check that $\psi$ preserves the $4$T-relations. We
consider only the third relation from Fig.~\ref{4trel}. We have
 \begin{gather*}
\psi\left(\fl\right)=\flo+\flt+\flth+\flf,\\
\psi\left(\slef\right)=\slo+\slt+\slth+\slf,\\
\psi\left(\fr\right)=\fro+\frt+\frth+\frf,\\
\psi\left(\sref\right)=\sro+\srt+\srth+\srf.
 \end{gather*}

As a result we have
 \begin{multline*}
\psi\left(\fl-\slef\right)=\flt-\slt+\flth-\slth\\
=\frt-\srt+\frth-\srth=\psi\left(\fr-\sref\right).
 \end{multline*}

The other relations are checked analogously.
 \end{proof}

 \section{An application of $\psi$}\label{sec:appl}

It is well-known that the connection sum of chord diagrams is well
defined~\cite{BN}. In this section, by using $\psi$ we show that a
connected sum of framed chord diagrams is not a well-defined
operation in $\mathcal{M}^f$. Consider two framed chord diagrams
$D_1$ and $D_2$ depicted in Fig.~\ref{twofchd}.

 \begin{figure}
  \centering\includegraphics[width=200pt]{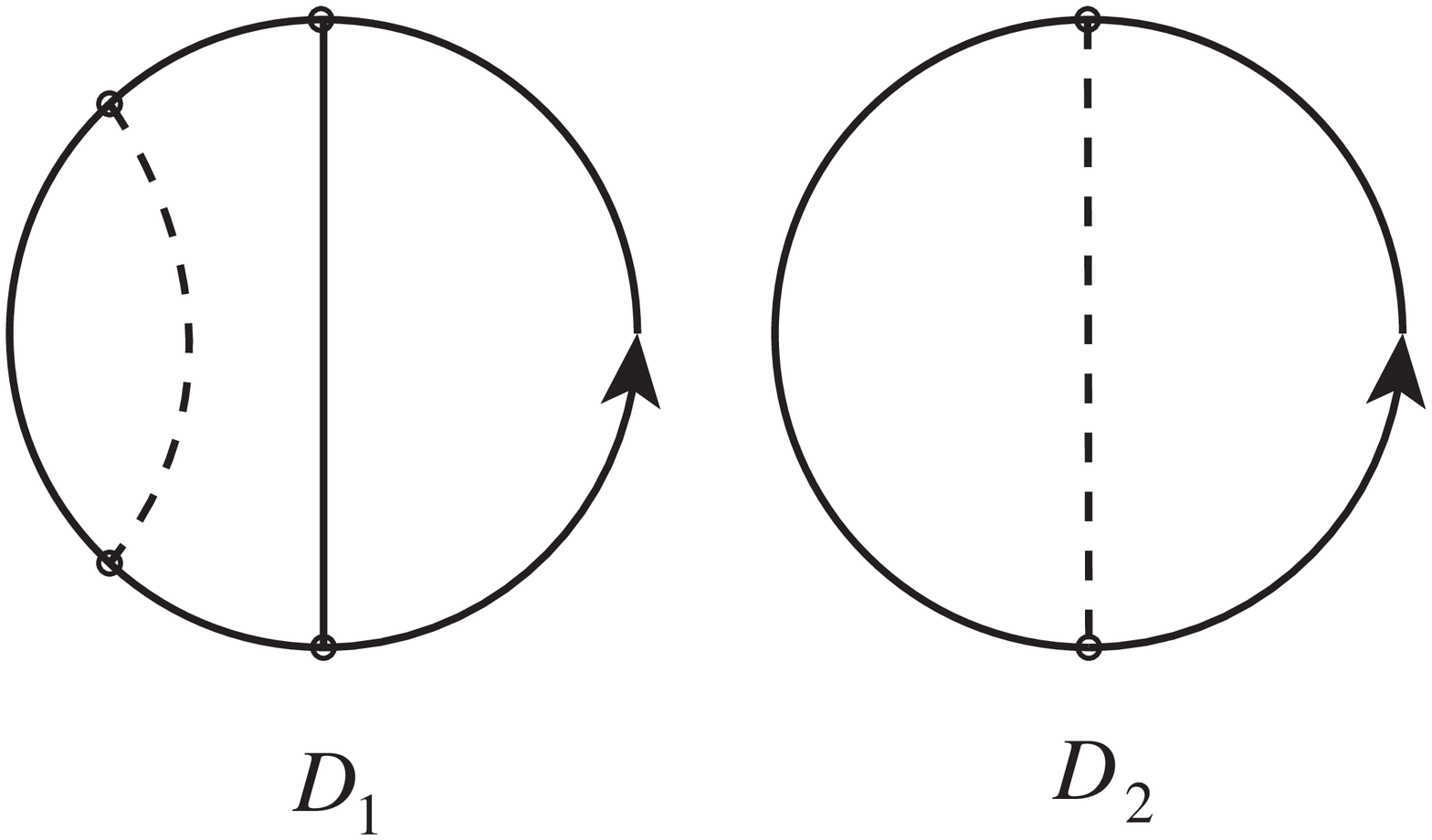}
  \caption{Two framed chord diagrams}\label{twofchd}
 \end{figure}

Choosing points on the chord diagrams in different ways we can
obtain the following two connected sums $D$ and $D'$, see
Fig.~\ref{twoconsum}.

 \begin{figure}
  \centering\includegraphics[width=200pt]{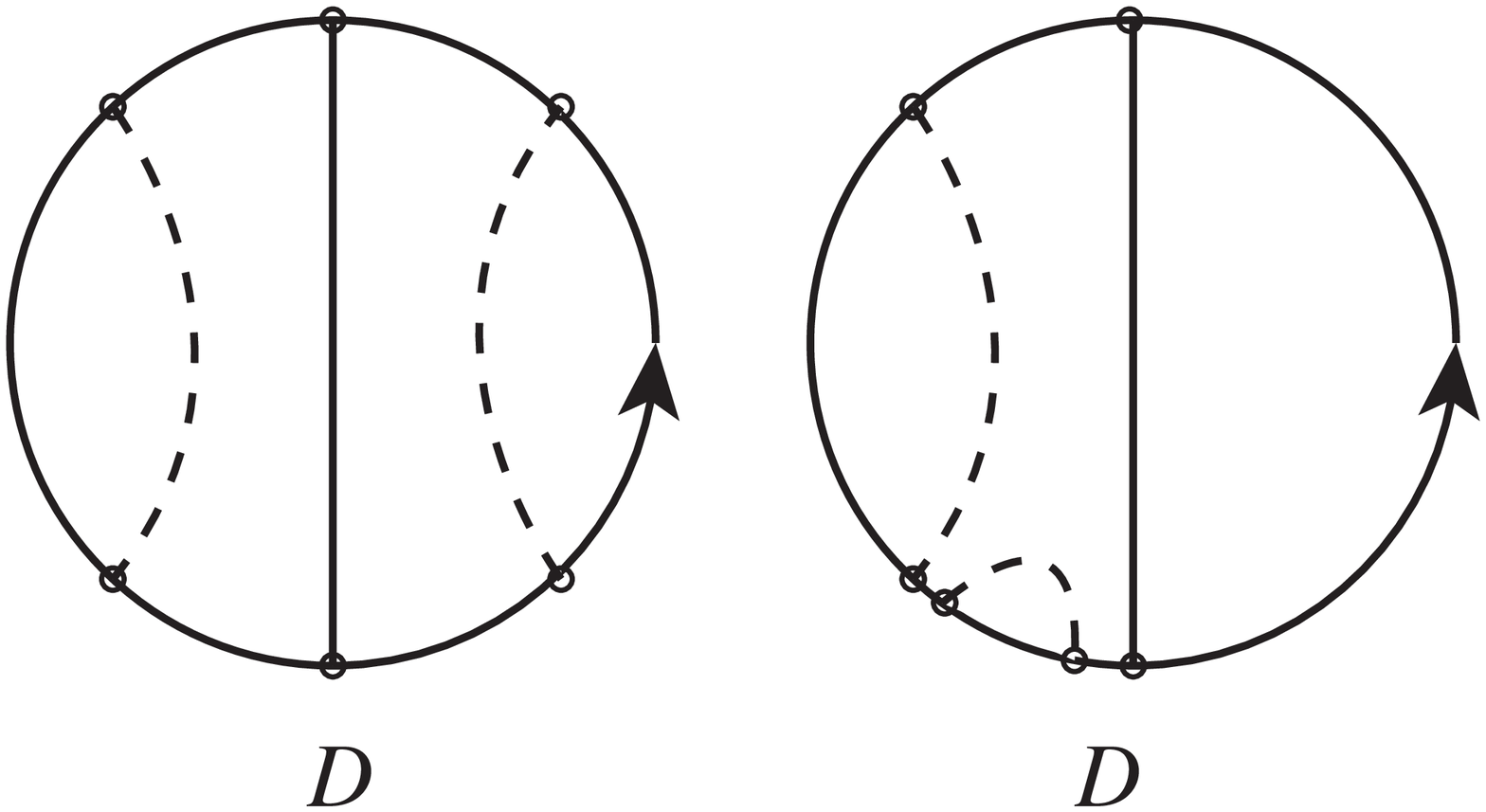}
  \caption{Two connected sums}\label{twoconsum}
 \end{figure}

We have
 \begin{gather*}
\psi(D)=4\twocsone+4\twocstwo=8\twocsone,\\
\psi(D')=4\twocsthree+4\twocsfour=8\twocsthree.
 \end{gather*}
It is not difficult to show that the number of connected components
of $N\left(\twocsone\right)$ is equal to $1$, but the number of
connected components of $N\left(\twocsthree\right)$ equals $3$.
Therefore, $w(\psi(D))=8$ and $w(\psi(D'))=24$ and the elements
$8\twocsone$ and $8\twocsthree$ do not coincide in $\mathcal{M}_2$,
so do the elements $D$ and $D'$ in $\mathcal{M}^f$.

 \section{Linear diagrams}\label{sec:ld}

Besides framed chord diagrams and double chord diagrams, we can
consider linear diagrams and double linear diagrams.

 \begin{definition}
A {\em linear diagram} is an oriented line with a finite number of
arcs having their endpoints on this line. A linear diagram is {\em
framed} if a map (a {\em framing}) from the set of arcs to
$\mathbb{Z}/2\mathbb{Z}$ is given, i.e.\ every arc is endowed with
$0$ or $1$.
 \end{definition}

 \begin{remark}
We consider all framed linear diagrams up to orientation and framing
preserving isomorphisms of graphs taking one line to the other one.
Arcs having framing $0$ are solid arcs, and those having framing
$1$, are dashed ones, see Fig.~\ref{ex_fld}.

Having a framed linear diagram $G$, we can construct the framed
chord diagram, the closure $\mathrm{Cl}(B)$ of $G$, by ``closing''
the line. It is not difficult to see that this operation (the map
from the set of linear framed chord diagram to the set of framed
chord diagram) is well defined.
 \end{remark}

 \begin{figure}
  \centering\includegraphics[width=200pt]{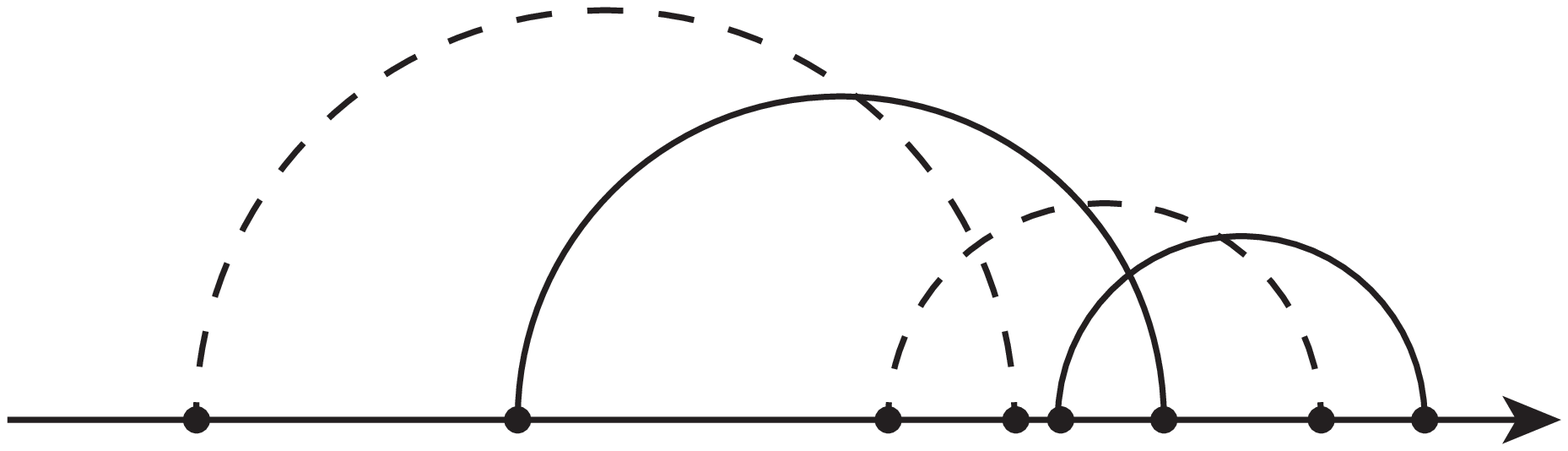}
  \caption{A framed linear diagram with two chords of framing $0$ and two chords of framing $1$}\label{ex_fld}
 \end{figure}

Let $L^f$ be the free $\mathbb{Z}$-module generated by all framed
linear diagrams and $\mathcal{L}^f$ be the quotient module of $L^f$
modulo the relations shown in Fig.~\ref{4treld}. We refer to these
relations as to {\em linear $4$T-relations}.

 \begin{figure}
  \centering\includegraphics[width=300pt]{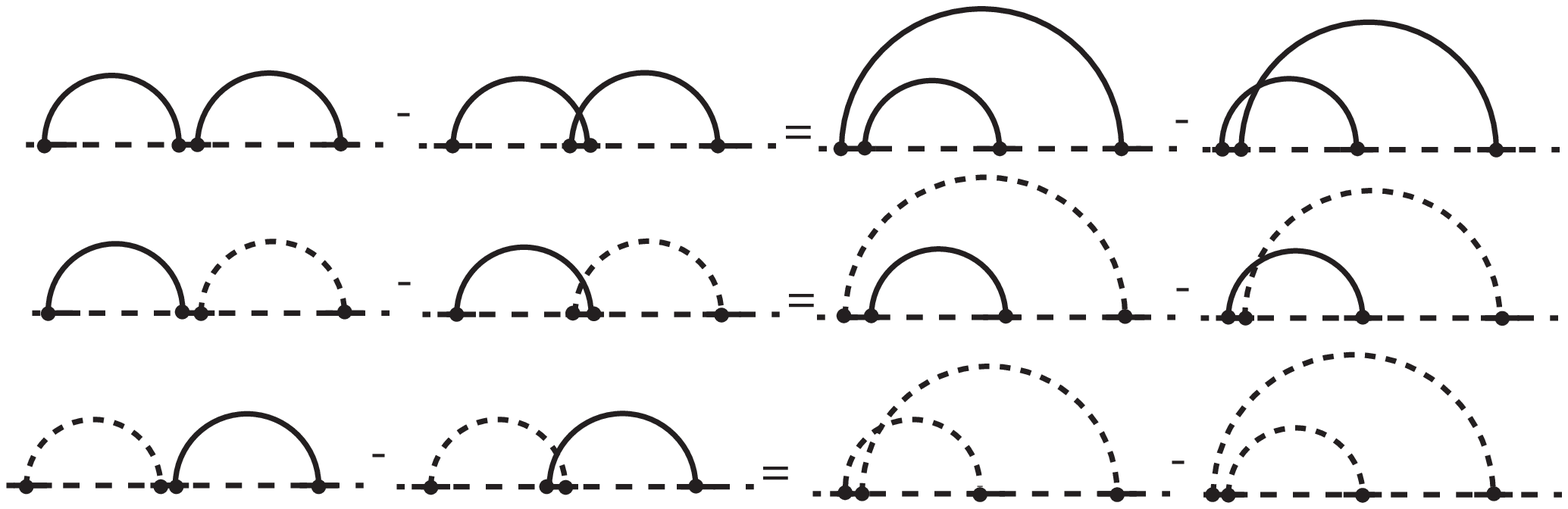}
  \caption{Linear $4$T-relations for framed linear diagrams}\label{4trel_ld}
 \end{figure}

 \begin{remark}
In Fig.~\ref{4trel_ld}, the lines on the LHS and RHS of each
equality are assumed to be oriented accordingly.
 \end{remark}

Analogously, we can consider chord diagrams only with framings $0$
and the corresponding linear $4$T-relations. As a result we obtain
the submodule $\mathcal{L}$ of $\mathcal{L}^f$. The module
$\mathcal{L}$ can be endowed with the structure of a commutative
cocommutative Hopf algebra, where the a product
$\mathcal{L}\otimes\mathcal{L}\to\mathcal{L}$ is given by gluing two
lines according to the orientation.

 \begin{definition}
A {\em double linear diagram} is a union of two disjoint framed
linear diagrams and arcs with endpoints on both the oriented lines.
 \end{definition}

 \begin{remark}
Double linear diagrams are considered up to isomorphisms preserving
the orientations of lines.
 \end{remark}

Let $L_2$ be the free $\mathbb{Z}$-module of double linear diagrams
and let $\mathcal{L}_2$ be the quotient module of $L_2$ modulo the
relations ({\em $4$T-relations}) shown in Fig.~\ref{4treldld}.

 \begin{figure}
  \centering\includegraphics[width=300pt]{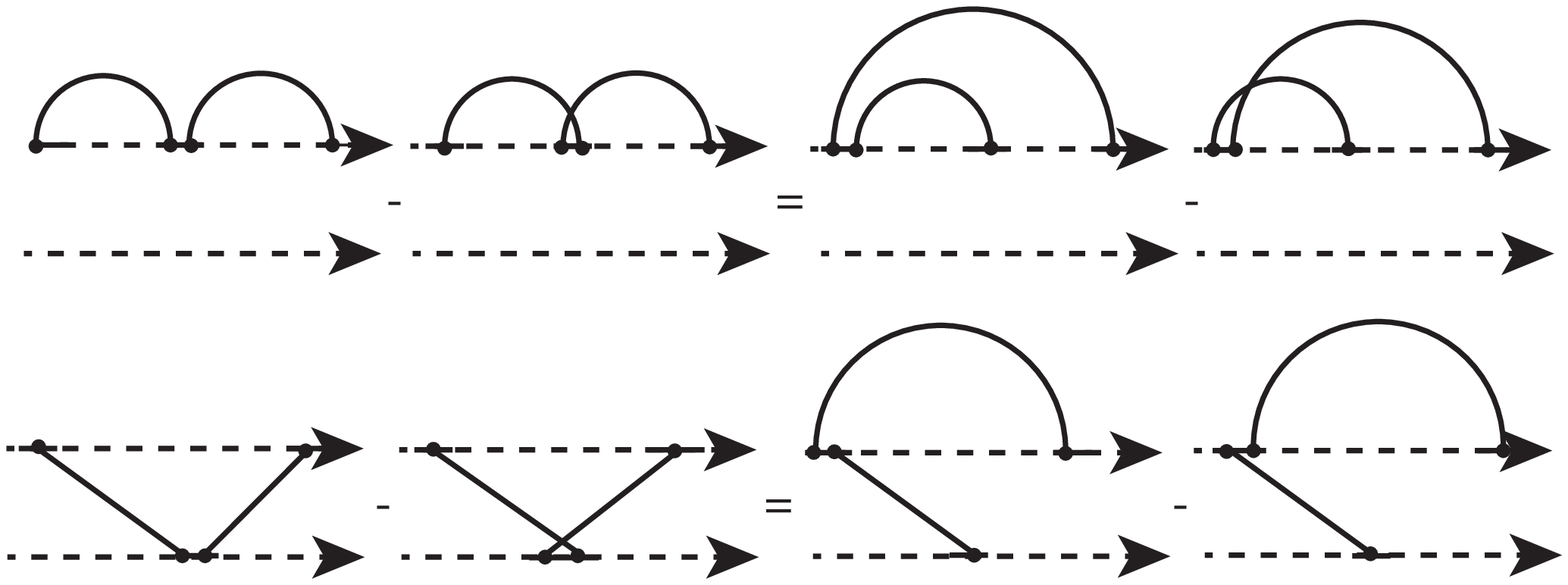}
  \caption{Linear $4$T-relations for double linear diagrams}\label{4treldld}
 \end{figure}

 \begin{definition}
Any linear map from the space of double linear diagrams satisfying
linear $4$T-relations is called a {\em weight system} on the set of
double linear diagrams.
 \end{definition}

Let $\mathcal{G}$ be a double chord diagram. Analogously to
Sec.~\ref{subswc:ws} we can define the {\em surgery} for
$\mathcal{G}$ and obtain a one-manifold $N(\mathcal{G})$, see
Fig.~\ref{surgdld}. $N(\mathcal{G})$ consists of two lines and a
collection of circles.

 \begin{figure}
  \centering\includegraphics[width=150pt]{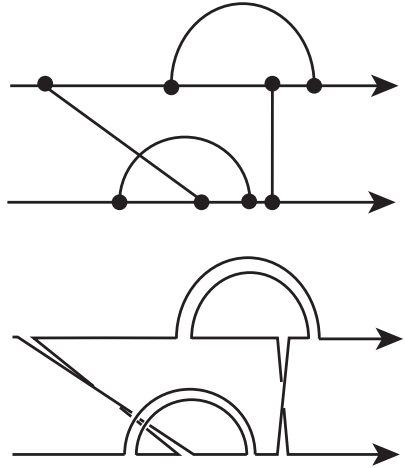}
  \caption{The surgery}\label{surgdld}
 \end{figure}

 \begin{theorem}\label{th:wsdld}
The number $\beta_{\mathcal{G}}$ of connected components of
$N(\mathcal{G})$ is invariant under linear $2$T-relations{\em,} see
Fig.~{\em\ref{2Treldld}}.
 \end{theorem}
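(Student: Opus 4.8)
The plan is to mimic exactly the strategy used for Theorem~\ref{th:ws}, where $\beta_{\mathcal{D}}$ was shown to be invariant under the (circular) $2$T-relations. First I would recall the effect of the surgery: each arc of a double linear diagram is replaced by a parallel pair of arcs (for arcs with both endpoints on one line) or by two suitably oriented arcs (for arcs between the two lines), with the small subarcs of the lines between adjacent endpoints removed, so that $N(\mathcal{G})$ is a $1$-manifold consisting of two lines plus finitely many circles. The number $\beta_{\mathcal{G}}$ counts all these components (the two line components plus the circle components). The key observation is that the surgered picture near the relevant fragment of a linear $2$T-relation is \emph{locally} identical to the surgered picture in the circular case, because the difference between a line and a circle is global, not local: the two line fragments appearing in Fig.~\ref{4treldld} behave, in a neighborhood of the two singled-out chords, exactly like arcs of core circles.

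Next I would carry out the case analysis. A linear $2$T-relation involves two configurations of a pair of chords that differ by sliding an endpoint past another; the three ``pieces'' of line carrying the relevant endpoints may be distributed among the two lines in several ways, and each chord of the pair may have framing $0$ or $1$ — exactly the cases enumerated in Fig.~\ref{2Treldld}. For each case I would draw the two surgered tangle fragments, observe that they are related by an ambient isotopy of the relevant portion of $\mathbb{R}^3$ (the same local move as in Melikhov's argument \cite{Mel} for framed chord diagrams, reproved in the double setting for Theorem~\ref{th:ws}), and conclude that the two surgered $1$-manifolds are homeomorphic, hence have the same number of connected components. Since everything outside the fragment is untouched, $\beta_{\mathcal{G}}$ is unchanged.

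The only genuinely new bookkeeping compared to Theorem~\ref{th:ws} is that one must confirm the two \emph{line} components stay two line components (and do not merge, or spawn/absorb a circle) under the move; this follows because the endpoints of the lines at infinity are never involved in the local fragment, so the two ends of each line remain on the same component before and after, while the Euler-characteristic/component count of the compact part is preserved by the local homeomorphism. I expect this step — making precise that the local isotopy used in the circular case still applies verbatim when one or two of the three pieces belong to a line rather than a circle — to be the main (though routine) obstacle: it is essentially a matter of checking that no case in Fig.~\ref{2Treldld} forces an endpoint of a line into the fragment in a way not already covered by the double-chord-diagram analysis. Once that is checked, the proof is complete by the same reasoning as for Theorem~\ref{th:ws}, and I would simply refer to that proof for the repeated details.
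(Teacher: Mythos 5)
Your proposal is correct and follows essentially the same route as the paper, which in fact omits the proof of this theorem entirely, relying (as for Theorem~\ref{th:ws}) on the observation that the local surgery analysis of the $2$T-moves from the chord-diagram case of~\cite{Mel} carries over verbatim because the line/circle distinction is global. Your extra remark that the two non-compact components cannot merge or change type under the local move is exactly the only new point that needs checking, so the proposal is complete modulo the routine case-by-case drawings.
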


 \begin{figure}
  \centering\includegraphics[width=300pt]{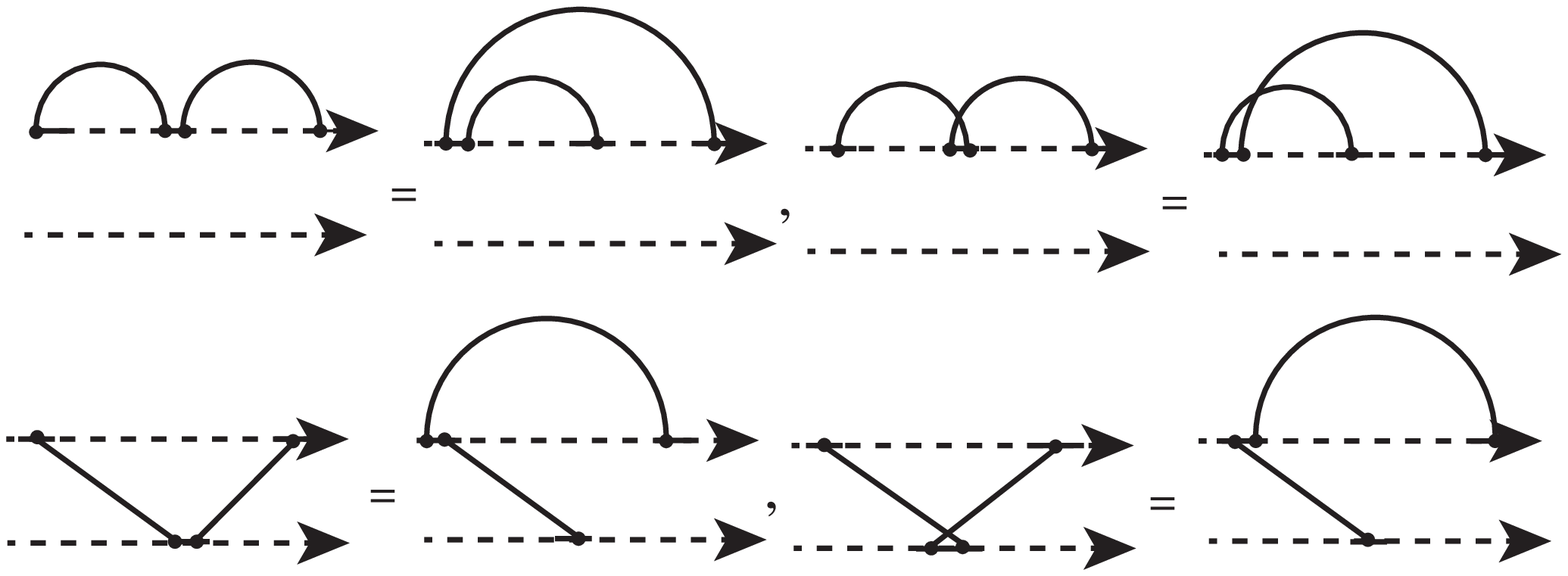}
  \caption{Linear $2$T-relations for double linear diagrams}\label{2Treldld}
 \end{figure}

 \begin{corollary}
The number $\beta_{\mathcal{G}}$ is invariant under linear
$4$T-relations.
 \end{corollary}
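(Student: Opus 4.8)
The plan is to deduce the corollary from Theorem~\ref{th:wsdld} in exactly the way the invariance of $\beta_{\mathcal{D}}$ under $4$T-relations was deduced from Theorem~\ref{th:ws}: namely, to observe that each linear $4$T-relation is already a consequence of linear $2$T-relations. First I would fix an instance of a linear $4$T-relation from Fig.~\ref{4treldld}. By the conventions of that figure, the four double linear diagrams occurring in it coincide outside a small piece of one core line, inside which one arc has a prescribed endpoint together with a ``movable'' endpoint placed in one of the four consecutive arcs cut out by the two endpoints of a second arc. Writing the relation as $\mathcal{G}_1-\mathcal{G}_2=\mathcal{G}_3-\mathcal{G}_4$, the diagrams $\mathcal{G}_1$ and $\mathcal{G}_2$ differ by pushing the movable endpoint across one endpoint of the second arc, and $\mathcal{G}_3$ and $\mathcal{G}_4$ differ by pushing it across the other endpoint of that arc.

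Next I would note that each of these two elementary moves is precisely a linear $2$T-move as depicted in Fig.~\ref{2Treldld}. Hence Theorem~\ref{th:wsdld} yields $\beta_{\mathcal{G}_1}=\beta_{\mathcal{G}_2}$ and $\beta_{\mathcal{G}_3}=\beta_{\mathcal{G}_4}$, so the alternating sum $\beta_{\mathcal{G}_1}-\beta_{\mathcal{G}_2}-\beta_{\mathcal{G}_3}+\beta_{\mathcal{G}_4}$ vanishes; that is, $\beta$ respects every linear $4$T-relation, and therefore the assignment $\mathcal{G}\mapsto\beta_{\mathcal{G}}$ descends to a well-defined linear map (weight system) $\mathcal{L}_2\to\mathbb{Z}$.

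The only point requiring care --- and the closest thing to an obstacle --- is that Fig.~\ref{4treldld} really stands for a family of pictures, distinguished by how the two arcs involved are distributed between the two core lines (a double linear diagram contains both arcs lying on a single line and arcs joining the two lines, and the movable endpoint may lie on either line). For each of these configurations one must check that the two relevant pairs of diagrams are genuinely related by the moves of Fig.~\ref{2Treldld}, keeping track of the orientations of the two lines so that the surgery defining $N(\mathcal{G})$ is performed coherently. This is a finite case-check entirely parallel to the chord-diagram situation behind Theorem~\ref{th:ws} and its corollary, and it introduces no new computation beyond the $2$T-invariance already established.
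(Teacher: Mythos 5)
Your proposal is correct and is exactly the argument the paper intends (and leaves implicit): each linear $4$T-relation splits as two pairs of diagrams related by linear $2$T-moves, so Theorem~\ref{th:wsdld} gives $\beta_{\mathcal{G}_1}=\beta_{\mathcal{G}_2}$ and $\beta_{\mathcal{G}_3}=\beta_{\mathcal{G}_4}$, whence the relation is respected. Your attention to the correct pairing of the four terms and to the case distinction over how the two arcs sit on the two lines is exactly the right point of care.
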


Define a map $w_l\colon\mathcal{L}_2\to\mathbb{Z}$ by putting
 $$
w_l(\alpha_1\mathcal{G}_1+\ldots+\alpha_k\mathcal{G}_k)=\alpha_1\beta_{\mathcal{G}_1}+\ldots+\alpha_k\beta{\mathcal{G}_k},
 $$
where $\alpha_i\in\mathbb{Z}$. Using Theorem~\ref{th:wsdld} we get
that the map $w_l$ is a weight system on $\mathcal{L}_2$.

Let us construct a map $\psi_l\colon\mathcal{L}^f\to\mathcal{L}_2$
in the same way as it was done in Sec.~\ref{sec:map}. For example,
 \begin{align*}
\psi_l\left(\exfld\right)&=\edldo+\edldt\\
&+\edldth+\edldf\\
&+\edldfv+\edlds\\
&+\edldse+\edlde.
 \end{align*}


 \begin{theorem}
The map $\psi_l\colon\mathcal{L}^f\to\mathcal{L}_2$ is well defined.
 \end{theorem}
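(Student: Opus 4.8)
The plan is to mirror the proof of the corresponding theorem for framed chord diagrams (the well-definedness of $\psi\colon\mathcal{M}^f\to\mathcal{M}_2$) almost verbatim, since $\psi_l$ is built by exactly the same recipe. Concretely, I must show that $\psi_l$ respects each of the linear $4$T-relations of Fig.~\ref{4trel_ld}: for a relation of the form $G_1-G_2=G_3-G_4$ among framed linear diagrams, I have to verify that $\psi_l(G_1)-\psi_l(G_2)-\psi_l(G_3)+\psi_l(G_4)=0$ in $\mathcal{L}_2$, i.e.\ modulo the linear $4$T-relations on double linear diagrams.

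First I would fix one of the linear $4$T-relations (say the third, as in the chord-diagram case) and expand $\psi_l$ on each of its four terms. Each framed linear diagram $G_i$ appearing in the relation has $n$ arcs; of these, the two ``active'' arcs involved in the relation are distinguished, and the remaining $n-2$ arcs are spectators. Applying $\psi_l$ produces $2^n$ summands, but I would organize the expansion by first fixing one of the $2^{n-2}$ placement patterns of the spectator arcs: for a fixed spectator pattern, the contribution to $\psi_l(G_i)$ is a sum of four double linear diagrams coming from the $2^2$ placements of the two active arcs. Thus it suffices to prove the identity $\psi_l(G_1)-\psi_l(G_2)=\psi_l(G_3)-\psi_l(G_4)$ spectator-pattern by spectator-pattern, and within each pattern the claim reduces to a statement about the four local double-linear-diagram configurations — precisely the expansions displayed for the chord-diagram analogue, namely that two of the four summands cancel outright (the ones where the two active arcs are placed ``incompatibly'' with the relation) and the surviving two summands on the left match the surviving two on the right. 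That matching is exactly a linear $4$T-relation among double linear diagrams (Fig.~\ref{4treldld}), so the difference vanishes in $\mathcal{L}_2$.

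Then I would note that the remaining linear $4$T-relations are handled in the same way, by the same bookkeeping, just with different local pictures; this is the routine verification that the paper elsewhere dispatches with ``the other relations are checked analogously.'' Finally, since $\psi_l$ is defined on generators and extended by linearity, and since it is now shown to kill all the defining relations of $\mathcal{L}^f$, it descends to a well-defined $\mathbb{Z}$-linear map $\psi_l\colon\mathcal{L}^f\to\mathcal{L}_2$.

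The only genuine subtlety — and thus the step I would be most careful about — is the effect of orientation reversal of the second line: in the chord-diagram construction one reverses the orientation of $C_2$, and here one correspondingly reverses the orientation of the second copy of the line. I would check that when an active arc has framing $1$ (so its two endpoints land on the two different lines), reversing the second line is precisely what makes the two local configurations on the left-hand side and the right-hand side of a linear $4$T-relation agree as elements of $\mathcal{L}_2$; if one forgot the reversal the cancellation would fail. Everything else is a direct transcription of the already-established chord-diagram argument, so no new ideas are needed — the ``hard part'' is merely making sure the local pictures with a reversed line are drawn and paired correctly.
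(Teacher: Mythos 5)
Your proposal is correct and follows exactly the argument the paper intends: the paper states this theorem without proof, implicitly relying on the same verification it carried out for $\psi\colon\mathcal{M}^f\to\mathcal{M}_2$, namely expanding each term of a linear $4$T-relation over the placements of the two active arcs (spectator arcs fixed), cancelling the two identical pairs of summands, and identifying the surviving terms via the $4$T-relations in $\mathcal{L}_2$. Your added attention to the orientation reversal of the second line is a sensible precaution but introduces no deviation from the paper's route.
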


Having two linear framed diagrams $G_1$ and $G_2$ we can define
their connected sum by gluing one line to the other one. For
example, if $G_1=\lfdone$ and $G_2=\lfdtwo$, then $G_1\#
G_2=\fconsum$ and $G_2\# G_1=\sconsum$.

Opposite to the case of framed chord diagrams, we have the following
theorem.

 \begin{theorem}
For any two linear framed diagrams we have
 $$
w_l(\psi_l(G_1\# G_2))=w_l(\psi_l(G_2\# G_1)).
 $$
 \end{theorem}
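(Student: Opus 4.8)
The plan is to reduce the equality $w_l(\psi_l(G_1\# G_2))=w_l(\psi_l(G_2\# G_1))$ to a purely combinatorial statement about the one-manifolds obtained by surgery, and then exploit the fact that a \emph{line} (unlike a circle) has two distinguishable ends, so that reversing the gluing order only amounts to a global reparametrization of the ambient picture. First I would unwind the definitions: by construction $\psi_l(G_1\# G_2)$ is the sum over all $2^{n}$ ways of placing the $n$ arcs of $G_1\# G_2$ onto the two lines $L_1,L_2$ (solid arcs with both ends on one line, dashed arcs with one end on each), with the orientation of $L_2$ reversed. Since $w_l$ is linear, $w_l(\psi_l(G_1\# G_2))=\sum_{S}\beta_{\mathcal{G}_S}$, the sum over all such placements $S$ of the number of connected components of the surgered one-manifold $N(\mathcal{G}_S)$. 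So it suffices to exhibit, for each placement $S$ of the arcs of $G_1\# G_2$, a placement $S'$ of the arcs of $G_2\# G_1$ with $\beta_{\mathcal{G}_S}=\beta_{\mathcal{G}_{S'}}$, in a way that sets up a bijection on placements.

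The key step is the geometric bijection. Writing $G_1\# G_2$ means $G_1$ occupies the initial segment of the line and $G_2$ the terminal segment; $G_2\# G_1$ reverses this. I would define the map on placements by: given a placement $S$ for $G_1\# G_2$, read off where each arc sits, then slide the $G_1$-block past the $G_2$-block along the line — equivalently, apply the orientation-reversing self-homeomorphism of the line that swaps the two blocks — and keep the same assignment of arcs to $L_1$ versus $L_2$. Concretely this is most cleanly phrased as: cut each line at the point between the $G_1$-part and the $G_2$-part, swap the two halves, and reglue. This operation visibly carries the combinatorial double linear diagram $\mathcal{G}_S$ to some double linear diagram $\mathcal{G}_{S'}$ built from the arcs of $G_2\# G_1$, and it is an involution up to relabeling, hence a bijection on the index sets of the two sums. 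The surgery $N(\cdot)$ is defined locally at each arc and along the arcs of the lines, so the surgered manifold transforms by the \emph{same} cut-swap-reglue operation: $N(\mathcal{G}_{S'})$ is obtained from $N(\mathcal{G}_S)$ by a homeomorphism of a neighborhood (two half-lines and a cut-and-regluing of the line parts), and therefore $\beta_{\mathcal{G}_{S'}}=\beta_{\mathcal{G}_S}$. Summing over $S$ gives the claimed equality.

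The main obstacle I anticipate is bookkeeping at the gluing seam: one must check that after cutting the two lines at the $G_1$/$G_2$ junction and swapping, the arcs of $G_2\# G_1$ are placed \emph{validly} (solid arcs on a single line, dashed arcs spanning both), that the orientation conventions used to define $\psi_l$ (the reversal of $L_2$) and the coherence conventions used to define the surgery $N(\cdot)$ are respected, and that no new or fewer circle components are created at the seam — i.e.\ that the cut-swap-reglue really is realized by an ambient homeomorphism rather than just a combinatorial relabeling. This is exactly the place where the argument genuinely uses that we are on a line and not a circle: on the circle the two ``ends'' of each block are not separable from the rest of the cycle, so sliding one block past the other is obstructed, which is why the analogous equality fails for framed chord diagrams (as shown in Section~\ref{sec:appl}). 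Once the seam analysis is carried out, the invariance of $\beta$ and the linearity of $w_l$ finish the proof immediately; I would not expect any further subtlety, so the bulk of the writeup is the careful picture-level verification at the junction.
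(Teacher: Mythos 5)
Your reduction is the same as the paper's: since no arc of $G_1\# G_2$ joins the $G_1$-block to the $G_2$-block, placements of the arcs of $G_1\# G_2$ and of $G_2\# G_1$ on the two lines are in natural bijection --- each summand of $\psi_l(G_1\# G_2)$ is $\mathcal{G}_1\#\mathcal{G}_2$ for a pair of summands $\mathcal{G}_1$ of $\psi_l(G_1)$ and $\mathcal{G}_2$ of $\psi_l(G_2)$, and the corresponding summand of $\psi_l(G_2\# G_1)$ is $\mathcal{G}_2\#\mathcal{G}_1$. By linearity everything then rests on $\beta_{\mathcal{G}_1\#\mathcal{G}_2}=\beta_{\mathcal{G}_2\#\mathcal{G}_1}$, and this is where your argument has a genuine gap. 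You justify it by saying the cut--swap--reglue at the seam is ``realized by an ambient homeomorphism'' because the surgery is local. Locality only tells you that $N(\mathcal{G}_2\#\mathcal{G}_1)$ is obtained from $N(\mathcal{G}_1\#\mathcal{G}_2)$ by cutting the strands passing through the seam and regluing their ends in a different pattern; cutting and regluing a one-manifold does \emph{not} preserve the number of components in general (that is exactly the mechanism by which the statement fails for framed chord diagrams). And in fact no piecewise-identity homeomorphism exists here: if, say, both non-compact strands of $N(\mathcal{G}_1)$ run from the first line to the second, and likewise for $N(\mathcal{G}_2)$, then a strand that lies on a closed circle of $N(\mathcal{G}_1\#\mathcal{G}_2)$ becomes an entire line of $N(\mathcal{G}_2\#\mathcal{G}_1)$. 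So the ``seam analysis'' you defer to the end is not bookkeeping; it is the whole content of the proof.

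The verification that is missing (and that the paper isolates as its key lemma) goes as follows. Each $N(\mathcal{G}_i)$ consists of $c_i$ circles together with exactly two arcs pairing up the four ends of its two lines, so $\beta_{\mathcal{G}_i}=c_i+2$. Either connected sum glues two chosen ends of $N(\mathcal{G}_1)$ to two chosen ends of $N(\mathcal{G}_2)$; with four arcs and two gluings one always obtains two lines and $c_1+c_2$ circles, plus one extra circle exactly when the two gluings close up a loop, which happens if and only if both arcs of $N(\mathcal{G}_1)$ and both arcs of $N(\mathcal{G}_2)$ join the first line to the second. That condition depends only on $\mathcal{G}_1$ and $\mathcal{G}_2$ and not on which ends are glued, so $\beta_{\mathcal{G}_1\#\mathcal{G}_2}=\beta_{\mathcal{G}_1}+\beta_{\mathcal{G}_2}-2$ or $-1$ with the correction term independent of the order of the sum; this is precisely the equality you need. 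With that case analysis supplied, your bijection argument closes up and coincides with the paper's proof.
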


 \begin{proof}
Let us mark out the lines in each summand of $\psi_l(G_1)$ and
$\psi_l(G_2)$. Then each summand of $\psi_l(G_1\# G_2)$ and
$\psi_l(G_2\# G_1)$ is the connected sum of a summand from
$\psi_l(G_1)$ and a summand from $\psi_l(G_2)$, where the first
(second) line is connected with the first (second) one. The validity
of the theorem follows from the following fact.

Let $\mathcal{G}_1$  and $\mathcal{G}_2$ be two double linear
diagrams with marked lines. Then
$\beta_{\mathcal{G}_1\#\mathcal{G}_2}$ is equal to
$\beta_{\mathcal{G}_1}+\beta_{\mathcal{G}_2}-1$ or
$\beta_{\mathcal{G}_1}+\beta_{\mathcal{G}_2}-2$, where $1$ and $2$
depend on the initial diagram $\mathcal{G}_1$ and $\mathcal{G}_2$,
but do not depend on the connected sums. This statement can be
easily proved by analyzing possible connections of non-compact
components of $N(\mathcal{G}_1)$ and $N(\mathcal{G}_2)$.
 \end{proof}




 \section*{Acknowledgments}

The authors are grateful to I.\,M.~Nikonov for his interest to this
work.

 \end {document}